\newtheorem{theorem}{Theorem}[]
\newtheorem{lemma}{Lemma}[]
\newtheorem{remark}{Remark}
\newtheorem{definition}{Definition}[]
\newtheorem{corollary}{Corollary}[]
\newcommand{\K}{\mathbf{K}}
\newcommand{\U}{\mathbf{U}}
\newcommand{\X}{\mathbf{X}}
\newcommand{\W}{\mathbf{W}}
\newcommand{\z}{\mathbf{z}}
\newcommand{\Q}{\mathbf{Q}}
\newcommand{\R}{\mathbf{R}}
\newcommand{\Rb}{\mathbb{R}}
\newcommand{\bu}{\mathbf{u}}
\newcommand{\bv}{\mathbf{v}}
\newcommand{\bw}{\mathbf{w}}
\newcommand{\f}{\mathbf{f}}
\newcommand{\F}{\mathbf{F}}
\newcommand{\x}{\mathbf{x}}
\newcommand{\bz}{\mathbf{z}}
\newcommand{\A}{\mathbf{A}}
\newcommand{\B}{\mathbf{B}}
\newcommand{\C}{\mathbf{C}}
\newcommand{\D}{\mathbf{D}}
\newcommand{\E}{\mathbf{E}}
\newcommand{\Eb}{\mathbb{E}}
\newcommand{\bP}{\mathbf{P}}
\newcommand{\Pb}{\mathbb{P}}
\newcommand{\Hb}{\mathbf{H}}
\newcommand{\Hc}{\mathcal{H}}
\newcommand{\I}{\mathbf{I}}
\newcommand{\M}{\mathbf{M}}
\newcommand{\Nb}{\mathbb{N}}
\newcommand{\Nbp}{\mathbb{N}_+}
\newcommand{\Sb}{\mathbf{S}}
\newcommand{\V}{\mathbf{V}}
\newcommand{\Y}{\mathbf{Y}}
\newcommand{\Z}{\mathbf{Z}}
\DeclareMathOperator*{\lqr}{lqr}
\definecolor{blue-violet}{rgb}{0.54, 0.17, 0.89}
\def\BibTeX{{\rm B\kern-.05em{\sc i\kern-.025em b}\kern-.08em
    T\kern-.1667em\lower.7ex\hbox{E}\kern-.125emX}}
\begin{document}
\title{Noise Sensitivity of the Semidefinite Programs for Direct Data-Driven LQR}
\author{Xiong Zeng, Laurent Bako, and Necmiye Ozay \IEEEmembership{Senior Member, IEEE}
\thanks{This work is supported in part by ONR CLEVR-AI MURI (\#N00014-21-1-2431) and NSF CNS Grant \#1931982.}
\thanks{Xiong Zeng and Necmiye Ozay are with the Department of Electrical Engineering and Computer Science, University
of Michigan Ann Arbor, MI 48105 USA. (e-mails: \{zengxion, necmiye\}@umich.edu). }
\thanks{Laurent Bako is with Ecole Centrale de Lyon, INSA Lyon, Universite Claude
Bernard Lyon 1, CNRS, Ampere, UMR 5005, 69130 Ecully, France. (email:
laurent.bako@ec-lyon.fr).}
 }

\maketitle

\begin{abstract} 
  In this paper, we study the noise sensitivity of the semidefinite program (SDP) proposed for direct data-driven infinite-horizon linear quadratic regulator (LQR) problem for discrete-time linear time-invariant systems. While this SDP is shown to find the true LQR controller in the noise-free setting, we show that it leads to a trivial solution with zero gain matrices when data is corrupted by noise, even when the noise is arbitrarily small. We then study a variant of the SDP that includes a robustness promoting regularization term and prove that regularization does not fully eliminate the sensitivity issue. In particular, the solution of the regularized SDP converges in probability also to a trivial solution. 
\end{abstract}

\begin{IEEEkeywords}
Direct Data-Driven Control
\end{IEEEkeywords}

\section{Introduction}

Certainty equivalence approach and robust control approach are two alternative paradigms in learning-based control. Roughly speaking, in certainty equivalence, we \emph{pretend} that our data is not corrupted by noise, the estimated model is the true system model, or the estimated control policy is designed based on the true system and clean data. Whereas, in robust control approach, we try to bound the effect of the noise in the data and aim to find a controller that achieves the desired properties for all possible noise values within this bound.

 These two different paradigms can be applied both in the model-based setting, where system identification is followed by control design, or in direct data-driven control, where data is used directly to synthesize a controller utilizing ideas from the behavioral system theory (see, e.g.,\cite{de2019formulas,martin2023guarantees}).  
In the context of model-based LQR, Mania et al. \cite{mania2019certainty} show that certainty equivalence is statistically consistent and is more sample-efficient than the robust approach given in \cite{dean2020sample}. The success of certainty equivalent control, in this case, lies in the fact that there is some inherent robustness in the solutions of the Riccati equations with respect to perturbations in system matrices. That is, small perturbations in system matrices result in small changes in the corresponding optimal controller (cf., Fig.~\ref{diagram}).

\begin{figure}
    \centering
{ \begin{tikzpicture}[font=\small]
\tikzstyle{block} = [draw, fill=white, rectangle, rounded corners, minimum height=2em, minimum width=3em]
\tikzstyle{label_style} = [font=\bfseries\color{blue!70}]

\node (data) at (-0.6,0) {  };
 \node  (M) at (0.9,0) {  };

\node[block, minimum width=4.5em, align=center] (sysid) at (2.1,1.1) {System Id};
\node[block, minimum width=7.5em, align=center] (LQR) at (5.1,1.1) { LQR$(\hat{\A}, \hat{\B}, \Q, \R)$};
\node[block, minimum width=7.5em, align=center] (sdp) at (4,-1.1) {DDD LQR via \eqref{CEDDDLQRSDP}};
\node (kce) at (7.3,-1.1) { $\K_{ce}$};
\node (kmb) at (7.3,1.1) { $\K_{mb}$};

\node at (4,2.1) {\textcolor{blue!70}{\textbf{Model-based}}};
\node at (4,-0.1) {\textcolor{red!70}{\textbf{Direct data-driven}}};

 \draw [-, thick] (data) -- node[above] {\hspace{-11pt}\small$\{ \bu_t, \x_t \}_{t=0}^T$} (0.8,0);
 \draw [->, thick] (0.8,0) |- (sysid.west);
\draw [->, thick] (sysid) -- node[above] {\tiny$(\hat{\A}, \hat{\B})$} (LQR);
\draw [->, thick] (LQR)  -- node[above] { } (kmb);
\draw [->, thick] (0.8,0) |- (sdp.west);
\draw [->, thick] (sdp) -- node[above] { } (kce);

\draw[thick, blue!70 ] (1.1 ,0.4) rectangle (6.6,1.8);
\draw[thick, red!70 ] (1.1 ,-1.8) rectangle (6.6,-0.4);

\end{tikzpicture}}
   \caption{Model-based (or indirect) and direct data-driven control algorithms give the same LQR gain, which is equal to the true LQR gain when the input data $\{ \bu_t, \x_t \}_{t=0}^T$ is persistently exciting and comes from a noise-free system. The model-based algorithm is continuous (indeed locally Lipschitz continuous) with respect to its inputs (a property also known as algorithmic robustness), therefore its output degrades gracefully with a change in the input \cite{mania2019certainty}. In this paper, we show that the direct data-driven LQR algorithm is discontinuous and even with arbitrary small noise (in almost all directions), the resulting gain reduces to the trivial gain of zero. }
  \label{diagram}
\end{figure}
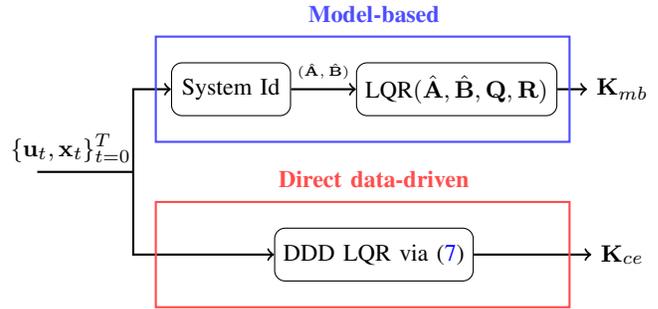

A natural question is how the certainty equivalence approach and a robust control approach compare in terms of statistical properties for direct data-driven LQR. Several works consider a robust approach for direct data-driven control for different control objectives or noise settings (e.g., \cite{van2020noisy, dai2023data,berberich2020robust}). On the other hand,  De Persis and Tesi \cite{de2021low} analyze a certainty equivalent approach to direct data-driven LQR, where they provide a sufficient condition for stabilizability and regularization techniques for improving noise robustness. Here, the certainty equivalence approach amounts to using the noisy data directly in the semidefinite programs developed for the noise-free case.  It is observed in \cite{de2021low} that even small noise can lead to a violation of their proposed sufficient condition for stabilizability and the semidefinite program may favor low gain solutions. However, a thorough understanding of the noise sensitivity and statistical properties of these semidefinite programs for direct data-driven LQR is missing. In this paper, we show that the semidefinite program for direct data-driven LQR is very sensitive to noise and yields, with probability one, \emph{trivial control gains} independent of the data even when there is an arbitrarily small amount of noise. Moreover, the robustified version also suffers from a similar issue as the length of the data trajectory used in the program goes to infinity. Therefore, neither of these approaches is statistically consistent. 

It is worth noting that there are recent works that propose alternative optimization formulations for direct data-driven control \cite{dorfler2023certainty,zhao2024data}, solutions of which mimic the solution of the model-based certainty equivalent control. Since the controllers synthesized by these alternative formulations are equivalent to model-based certainty equivalent control, they inherit the nice statistical consistency properties of the former. Our analysis does not pertain to these alternative formulations. Similarly, our results are not directly related to the finite-horizon data-driven control problems for which some connections with model-based approaches are established \cite{fiedler2021relationship,mattsson2024equivalence} since we focus on approximation of the infinite-horizon LQR gain.


{A preliminary version of this paper has been submitted to \cite{zeng2024dddlqr}. Compared with the conference version, we extend the results of certainty equivalent DDD LQR from scalar systems to multivariate systems and we also show that when the length of the data trajectory approaches infinity, the robustness promoting DDD LQR from \cite{de2021low} will also yield a zero state feedback gain estimate. }


The remainder of the paper is organized as follows. In Section \ref{sec_preliminaries}, we review some basic definitions from probability and data-driven control. Section \ref{sec_DDD_LQR} reviews two formulations of the direct data-driven LQR problem from the literature. Section \ref{sec_CE_DD_LQR} and
Section \ref{sec_RP_DDD_LQR} introduces the main results of the paper together with their proofs. We provide some numerical examples in Section~\ref{sec:num_ex} before concluding the paper in Section~\ref{sec_conclusion_future_work}.

\textbf{Notation.} We use lower case, lower case boldface, and upper case boldface letters to denote scalars,
vectors, and matrices respectively. For a matrix $\X$, $\underline{\sigma}(\X)$ and $\overline{\sigma}(\X)$  denote the least and largest singular value of $\X$, respectively. We denote by  $\det(\X)$ the determinant of $\X$, and $\X^\top$ denotes the transpose of $\X$. For a square matrix $\M$, $\rho(\M)$ denotes its spectral radius, $\mathbf{M}\succ 0$ ($\succeq 0$) denotes that $\mathbf{M}$ is symmetric and positive definite (positive semidefinite). $\A\succeq \B$ denotes that $\A-\B \succeq 0$. For a matrix $\M\succeq 0$, $\sqrt{\M}$ denotes the   matrix square root of $\M$, that is the unique matrix $\mathbf{F}\succeq 0$ such that $\mathbf{F}^2=\M$. $\I_n$ denotes the identity matrix whose dimension is $n\times n$. We use $\mathbb{N}$ to denote the set $\{0,1,2,\dots \}$, use $\Nbp$ to denote the set $\{1,2,\dots \}$, and use $[T]$ to denote the set $\{0,1,\dots,T \}$ for $T\in \Nb$.  $\mathcal{N} $ denotes the Gaussian distribution. $\mathbb{R}$ denotes the real number domain. 
For a vector-valued sequence $\f_0,\f_1, \ldots$, with $\f_t\in \Rb^s$ for $t\in \mathbb{N}$, we define the matrix $\F_i(j):=\left[\f_i \; \f_{i+1} \;  \ldots \; \f_{i+j-1} \right] \in \Rb^{s\times j} $.

\section{Preliminaries }
\label{sec_preliminaries}
\subsection{Probability fundamentals}
\begin{definition}[Convergence in Probability]
    A sequence $\left\{X(T)\right\}_{T=0}^{\infty}$ of random variables converges in probability towards the random variable $X$ if for all $\delta>0$

$$
\lim _{T \rightarrow \infty} \mathbb{P}\left(\left|X(T)-X\right|>\delta\right)=0,
$$
which is denoted as $X(T) \overset{p}{\rightarrow} X$ for short.
\end{definition}

The definition trivially generalizes to vector- and matrix-valued random variables by considering them element-wise.

\subsection{Results from Data-Driven Control}

The following notion will be relevant in establishing our results.

 \begin{definition}[Persistency of Excitation]
  \label{persistencyOfExcitation}
  Let $T \in \Nbp$. A sequence $\F_0({ T )} \in \Rb^{s\times T}$ is called persistently exciting of order $k \in \Nbp$ if the Hankel matrix\begin{equation}
\begin{aligned}
  &  \mathcal{H}_k\left(\F_0({ T )}\right):=\\
  &\left[\begin{array}{cccc}
\f_0 & \f_{1} & \cdots & \f_{T-k} \\
\f_{1} & \f_{2} & \cdots & \f_{T-k+1} \\
\vdots & \vdots & \ddots & \vdots \\
\f_{k-1} & \f_{k} & \cdots & \f_{T-1 }
\end{array}\right]\in \Rb^{ks\times (T-k+1)},
\label{Hankel}
\end{aligned}
\end{equation}
has full row rank, i.e., $rank(\mathcal{H}_k\left(\F_0({ T )}\right)) = ks$.
  \end{definition}

\section{Direct Data-Driven (DDD) LQR} 
\label{sec_DDD_LQR}
We consider the following   discrete-time linear time-invariant (LTI) system in this paper:
\begin{equation}
 \x_{t+1} = \A \x_{t} + \B \bu_t + \bw_t,
     \label{LTI}
 \end{equation}
 where $\x_t \in \mathbb{R}^n $, $\bu_t \in \mathbb{R}^m $, $\bw_t \in \mathbb{R}^n $ are the state, input, and noise at time $t$, respectively. We assume that the initial state and the noise are independent random variables with distributions satisfying $\bw_t \overset{i.i.d.}{\sim} \mathcal{N}(0, \sigma_{w}^2 \I_n )$, $\Eb[\x_0]=0$, and   $\Eb [\x_0^2] = \sigma^2_{x_0}\I_n$.
 The system parameters $\A$ and $\B$ are unknown. The standard LQR problem for the  LTI system \eqref{LTI} is
\begin{equation}
\begin{aligned}
& \min _{\bu_0, \bu_1, \cdots} \lim_{T \rightarrow \infty} \mathbb{E}\left[\frac{1}{T} \sum_{t=0}^T\left(\x_t^\top \Q \x_t  + \bu_t^\top \R \bu_t   \right)\right] \\
& \text { s.t. } \text{Dynamics in} \;
\eqref{LTI},
\end{aligned}
\label{LQR}
\end{equation}
where
$\Q\succ 0$, $\R \succ 0$, and $\mathbb{E}$ denotes the expectation over the randomness from the initial state $\x_0$ and the process noise $\bw_t$. We assume  $(\A, \B)$ is controllable throughout the paper.
When $\A$ and $\B$ are known, this LQR problem can be solved by finding the positive definite solution $\bP$ of the discrete-time algebraic Riccati equation:  
\begin{equation}
\bP = \A^{\top} \bP  \A - \A^{\top} \bP  \B \left(\R+ \B^{\top} \bP  \B \right)^{-1} \B^{\top} \bP  \A + \Q.
\label{DARE}
\end{equation}
Then, the solution of \eqref{LQR} is $\bu_t=-\K_{\lqr} \x_t$ with the optimal state feedback gain given by 
\begin{equation}
\K_{\lqr} = -\left(\R + \B^{\top} \bP \B \right)^{-1} \B^{\top} \bP  \A .
\label{KLQR}
\end{equation}

In direct data-driven control, the parameters $\A$ and $\B$ are unknown and the goal is to directly estimate $\K_{\lqr}$ from data without explicitly estimating $\A$ and $\B$. We assume the data is collected offline by driving the system with a random input such that $\bu_t \overset{i.i.d.}{\sim} \mathcal{N}(0, \sigma_{u}^2  \I_m )$ with {$\sigma_u>0$}, and this input is independent of the noise process and the initial condition. Let us define the data matrices formed from this trajectory data collected over some horizon $T$:  
\begin{equation}
\begin{aligned}
\X_0(T)   & =\left[\begin{array}{llll}
\x_0 & \x_1 & \ldots & \x_{T-1}
\end{array}\right] \in \mathbb{R}^{n \times T}, \\
\U_0(T )    & =\left[\begin{array}{llll}
\bu_0 & \bu_1 & \ldots & \bu_{T-1}
\end{array}\right] \in \mathbb{R}^{m \times T}, \\
\X_1(T)  & =\left[\begin{array}{llll}
\x_1 & \x_2 & \ldots & \x_{T}
\end{array}\right] \in \mathbb{R}^{n \times T}.
\end{aligned}
\label{data_matrix}
\end{equation}
We similarly define $\W_0(T)     =\left[\begin{array}{llll}
\bw_0 & \bw_1 & \ldots & \bw_{T-1}
\end{array}\right] \in \mathbb{R}^{n \times T}$ for the (unknown) noise sequence.
Most of the time, when  $T$ is clear from context, we will drop it and use $\X_0$, $\U_0$, $\X_1$, and $\W_0$  to indicate the above quantities. 

De Persis and Tesi \cite{de2019formulas} proposed a  semidefinite program, called the DDD LQR, to estimate the optimal LQR gain:

 \begin{equation}
 \label{CEDDDLQRSDP}
\begin{array}{cl}
\underset{\X,  \Y}{\operatorname{minimize}} & \operatorname{trace}\left(\Q \X_0 \Y  \right)+\operatorname{trace}\left(  \X  \right) \\
\text { subject to } & 
\left[\begin{array}{cc}
\X_{0} \Y - \I_n &  \X_{1}  \Y \\
\Y^{\top} \X_{1}^{\top} & \X_{0} \Y
\end{array}\right] \succeq 0\\
&\left[\begin{array}{cc}
\X &  \sqrt{\R} \U_0 \Y  \\
\left(\sqrt{\R} \U_0 \Y \right)^{\top} & \X_0 \Y
\end{array}\right] \succeq 0,
\end{array}
\end{equation}
where $\X\in\Rb^{m\times m}$ and $  \Y\in\Rb^{T\times n}$. If $\Y^*_{ce}$ denotes an optimal solution of \eqref{CEDDDLQRSDP}, then the LQR gain estimate is given by 
\begin{equation}
\label{eq:kce}
\K_{ce} := -\U_0 \Y^*_{ce} (\X_0 \Y^*_{ce})^{-1}.
\end{equation} 
The following result from \cite{de2019formulas} shows that when the system does not have any noise (i.e., $\bw_t=0$ for all $t$) and a persistency of excitation condition holds (which can be shown to hold with probability $1$ when the input is Gaussian as assumed), we have $ \K_{ce} = \K_{\lqr} $. 

\begin{theorem}[Theorem 4 in \cite{de2019formulas}]
\label{thm:NoiselessIsGood}
Consider the data matrices $\X_0$, $\U_0$, $\X_1$ defined above and the feedback gain \eqref{eq:kce}. 
If  $\operatorname{rank}\left(\left[\begin{array}{l}
\X_0  \\
\U_0
\end{array}\right]\right)=m+n$ and $\bw_t=0$ for all $t$, then $\K_{ce} =\K_{\lqr} $.
\end{theorem}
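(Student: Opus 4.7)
The core idea of my proof plan is to exploit the noise-free identity $\X_1 = \A\X_0 + \B\U_0$ together with the persistency-of-excitation rank condition to re-parameterize the SDP \eqref{CEDDDLQRSDP} in terms of a closed-loop Lyapunov variable, then recognize the resulting problem as a classical SDP formulation of the LQR.

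First, I would observe that since $\begin{bmatrix}\X_0\\ \U_0\end{bmatrix}\in\Rb^{(n+m)\times T}$ has full row rank $n+m$, the linear map $\Y\mapsto(\X_0\Y,\U_0\Y)$ from $\Rb^{T\times n}$ onto pairs $(\bH,\bL)\in\Rb^{n\times n}\times\Rb^{m\times n}$ is surjective. So I can parameterize feasible variables as $\bH:=\X_0\Y$ and $\bL:=\U_0\Y$, and in the noise-free case we then get
\begin{equation*}
\X_1\Y=\A\X_0\Y+\B\U_0\Y=\A\bH+\B\bL.
\end{equation*}
Introducing $\K:=-\bL\bH^{-1}$ (treating invertibility of $\bH$ momentarily), $\X_1\Y=(\A-\B\K)\bH$.

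Next I would apply the Schur complement to both LMIs. The first constraint becomes equivalent to the Lyapunov-type inequality $\bH-(\A-\B\K)\bH(\A-\B\K)^\top\succeq\I_n$ (which, in particular, forces $\bH\succ 0$, retroactively justifying the inversion). The second constraint, under Schur, gives $\X\succeq\sqrt{\R}\,\K\bH\K^\top\sqrt{\R}$, and since we minimize $\operatorname{trace}(\X)$, at optimum $\operatorname{trace}(\X)=\operatorname{trace}(\K^\top\R\K\,\bH)$. The cost then reduces to $\operatorname{trace}\big((\Q+\K^\top\R\K)\bH\big)$, so \eqref{CEDDDLQRSDP} is equivalent to
\begin{equation*}
\min_{\bH\succ 0,\K}\ \operatorname{trace}\big((\Q+\K^\top\R\K)\bH\big)\quad\text{s.t.}\quad \bH-(\A-\B\K)\bH(\A-\B\K)^\top\succeq\I_n.
\end{equation*}

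The last step is to recognize this as the standard primal LQR SDP whose minimizer is $\K=\K_{\lqr}$ from \eqref{KLQR}; this is a classical fact that follows from the discrete-time Riccati equation \eqref{DARE} and can be invoked directly. Reading back, at the optimum $\bL^*=-\K_{\lqr}\bH^*$, and since the map $\Y\mapsto(\X_0\Y,\U_0\Y)$ is surjective, any $\Y^*_{ce}$ attaining the minimum satisfies $\U_0\Y^*_{ce}=-\K_{\lqr}\X_0\Y^*_{ce}$, whence $\K_{ce}=-\U_0\Y^*_{ce}(\X_0\Y^*_{ce})^{-1}=\K_{\lqr}$.

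The main subtlety, and the step I would be most careful with, is the invertibility of $\X_0\Y$ at optimality: a priori the SDP allows $\bH=\X_0\Y$ to be only positive semidefinite, and the formula \eqref{eq:kce} would be ill-defined without $\bH\succ 0$. The Lyapunov LMI ensures $\bH\succeq\I_n+(\A-\B\K)\bH(\A-\B\K)^\top\succeq\I_n$, so $\bH\succ 0$ is automatic on the feasible set, which is what makes the reparameterization and the eventual inversion in \eqref{eq:kce} legitimate. Everything else is routine Schur-complement algebra and an appeal to the standard LQR SDP duality.
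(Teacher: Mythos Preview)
The paper does not prove this theorem; it is quoted verbatim as Theorem~4 of \cite{de2019formulas} and used purely as background for the noise-free case, so there is no in-paper proof to compare against.

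Your plan is essentially the standard argument from \cite{de2019formulas} and is correct in outline: use $\X_1=\A\X_0+\B\U_0$, reparameterize via $\bH=\X_0\Y$, $\bL=\U_0\Y$ (surjective by the rank hypothesis), Schur-complement both LMIs, and reduce to the classical primal SDP for LQR whose unique optimal gain is $\K_{\lqr}$. One small logical wrinkle: your justification that $\bH\succ 0$ is circular as written, because deriving the Lyapunov inequality via Schur complement already presupposes invertibility of the lower-right block $\X_0\Y$. The clean fix is immediate: positive semidefiniteness of the first block matrix forces its top-left principal block $\X_0\Y-\I_n$ to be positive semidefinite on its own, so $\bH\succeq\I_n\succ 0$ holds for every feasible $\Y$ before any Schur manipulation. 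With that adjustment the inversion defining $\K$, the Schur reductions, and the final identification with the LQR SDP all go through.
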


The estimate $\K_{ce}$ is also known as a certainty equivalent DDD LQR solution \cite{de2021low,de2019formulas}. While the quality of the estimate $\K_{ce}$ is not well-understood in the noisy setting in general, the following sufficient condition is derived in \cite{de2021low} to ensure that the gain $\K_{ce}$  is stabilizing.

\begin{lemma}[Lemma 4 in \cite{de2021low}]  
\label{lem:PreviousSensitiveLemma}
Consider an optimal solution $\Y^*_{ce}$ of \eqref{CEDDDLQRSDP}.
    Let 
    $$\M:=\Y_{ce}^* ( \X_0 \Y_{ce}^*)^{-1} \left(\Y_{ce}^*\right)^\top,$$ and 
    $$ \Psi :=  \W_0 \M  \W_0^\top -  \X_1 \M  \W_0^\top-  \W_0 \M  \X_1^\top.$$
   If there exists $\eta \geq 1$ such that
$$
 \Psi \preceq  \left( 1-\frac{1}{\eta} \right) \I_n, 
$$
then   the state feedback gain $\K_{ce}$ from \eqref{eq:kce} is stabilizing  for the system \eqref{LTI}.
\end{lemma}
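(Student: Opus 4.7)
The plan is to show that $\X_0 \Y^*_{ce}$ serves as a discrete-time Lyapunov certificate for the closed-loop matrix $\A_{cl} := \A - \B \K_{ce}$. Once a strict Lyapunov inequality for $\A_{cl}$ is in hand, the classical discrete Lyapunov stability theorem yields $\rho(\A_{cl})<1$, which is exactly the stabilizing property we want to conclude.

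The first step is to extract two consequences from the first LMI in \eqref{CEDDDLQRSDP}. Its $(1,1)$ block immediately gives $\X_0 \Y^*_{ce} \succeq \I_n \succ 0$, so $\X_0 \Y^*_{ce}$ is positive definite and invertible. A Schur complement against the $(2,2)$ block then produces
$$ \X_0 \Y^*_{ce} - \I_n \;\succeq\; \X_1 \Y^*_{ce}\,(\X_0 \Y^*_{ce})^{-1}\,(\Y^*_{ce})^\top \X_1^\top \;=\; \X_1 \M \X_1^\top. $$

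The second step ties this bound back to the true closed loop through the data equation $\X_1 = \A \X_0 + \B \U_0 + \W_0$. Right-multiplying this identity by $\Y^*_{ce} (\X_0 \Y^*_{ce})^{-1}$ and using the definition $\K_{ce} = -\U_0 \Y^*_{ce} (\X_0 \Y^*_{ce})^{-1}$ gives $\A_{cl} = (\X_1 - \W_0)\Y^*_{ce}(\X_0 \Y^*_{ce})^{-1}$, and hence
$$ \A_{cl}\,(\X_0 \Y^*_{ce})\,\A_{cl}^\top \;=\; (\X_1 - \W_0)\,\M\,(\X_1 - \W_0)^\top. $$
Expanding the right-hand side and comparing with the definition of $\Psi$ should yield the key algebraic identity $\X_1 \M \X_1^\top = \A_{cl}\,(\X_0 \Y^*_{ce})\,\A_{cl}^\top - \Psi$. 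Substituting back into the Schur bound and invoking the hypothesis $\Psi \preceq (1 - 1/\eta)\I_n$ gives
$$ \X_0 \Y^*_{ce} \;\succeq\; \A_{cl}\,(\X_0 \Y^*_{ce})\,\A_{cl}^\top + \I_n - \Psi \;\succeq\; \A_{cl}\,(\X_0 \Y^*_{ce})\,\A_{cl}^\top + \tfrac{1}{\eta}\I_n, $$
a strict discrete Lyapunov inequality with positive definite certificate and positive slack $\tfrac{1}{\eta}\I_n$, from which Schur stability of $\A_{cl}$ follows.

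The only genuine bookkeeping lies in the quadratic expansion of $(\X_1 - \W_0)\M(\X_1 - \W_0)^\top$: one must verify that the three terms $\W_0\M\W_0^\top$, $\X_1\M\W_0^\top$, and $\W_0\M\X_1^\top$ combine, with the correct signs, to produce exactly $-\Psi$. Aside from this verification, the argument is a routine pairing of Schur complement reasoning with the discrete Lyapunov stability theorem, and does not require any probabilistic reasoning about $\W_0$ beyond what is already encoded in the deterministic hypothesis on $\Psi$.
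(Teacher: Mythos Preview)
The paper does not supply its own proof of this lemma; it is quoted verbatim from \cite{de2021low} and used as a black box. So there is no in-paper argument to compare against.

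That said, your proposal is correct and is essentially the argument one finds in the original reference. The two ingredients---the Schur complement of the first LMI giving $\X_0\Y^*_{ce}-\I_n\succeq \X_1\M\X_1^\top$, and the data identity $\A_{cl}=(\X_1-\W_0)\Y^*_{ce}(\X_0\Y^*_{ce})^{-1}$---combine exactly as you describe. The quadratic expansion does produce $\X_1\M\X_1^\top=\A_{cl}(\X_0\Y^*_{ce})\A_{cl}^\top-\Psi$ with the signs matching the definition of $\Psi$, and the resulting inequality $\X_0\Y^*_{ce}\succeq \A_{cl}(\X_0\Y^*_{ce})\A_{cl}^\top+\tfrac{1}{\eta}\I_n$ is a bona fide discrete Lyapunov certificate. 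One small point worth making explicit (you rely on it implicitly when collapsing $\A_{cl}(\X_0\Y^*_{ce})\A_{cl}^\top$ to $(\X_1-\W_0)\M(\X_1-\W_0)^\top$): the LMI forces $\X_0\Y^*_{ce}$ to be symmetric, so $(\X_0\Y^*_{ce})^{-\top}=(\X_0\Y^*_{ce})^{-1}$ and the middle factor simplifies to $\M$ as claimed.
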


Our first main result (Theorem~\ref{th:CE_inconsistency}) is to show that even when the noise is arbitrarily small, $\K_{ce}$ can be an arbitrarily bad estimate.
We will also provide an interpretation of the above sufficient condition for stabilization in light of our Theorem \ref{th:CE_inconsistency}.

Given the observation that $\K_{ce}$ might not always give reasonable results with noisy data, in \cite{de2021low}, a regularized version of the problem control synthesis problem is proposed to increase the robustness of the DDD approach to noise. In particular, the DDD LQR with robustness promoting regularization is formulated as the following semidefinite program \cite{de2021low}:
\begin{equation}
\begin{array}{cl}
\underset{\X,  \Y,\Sb  }{\operatorname{minimize}} & \operatorname{trace}\left(\Q \X_0 \Y\right) + \operatorname{trace}\left(\X\right) + \eta \operatorname{trace}\left(\Sb\right)  \\
\text { subject to } 
& \left[\begin{array}{cc}
\X_{0} \Y - \I_n & \X_{1}  \Y \\
\Y^{\top} \X_{1}^{\top} & \X_{0} \Y
\end{array}\right] \succeq 0\\
&\left[\begin{array}{cc}
\X &  \sqrt{\R}  \U_0 \Y \\
\left( \sqrt{\R} \U_0 \Y \right)^{\top} & \X_0 \Y
\end{array}\right] \succeq 0\\
&\left[\begin{array}{cc}
\Sb &     \Y \\
\Y^{\top} & \X_0 \Y
\end{array}\right] \succeq 0,
\end{array}
\label{RPDDDLQRSDP}
\end{equation}
where $\X\in\Rb^{m\times m},  \Y\in\Rb^{T\times n}$, $\Sb\in \Rb^{T\times T}$, and $\eta$ is a positive regularization constant. Similarly, the data-driven optimal feedback gain based on this optimization problem is 
\begin{equation}\label{eq:Krp}
\K_{rp}:= - \U_0  \Y^*_{rp} \left(\X_0  \Y^*_{rp}\right)^{-1},
\end{equation} where $\Y^*_{rp}$ is an optimal solution of \eqref{RPDDDLQRSDP}.

Our second main result (Theorem~\ref{th:RP_inconsistency}) is to show that as the number of data points (i.e., $T$) goes to infinity, $\K_{rp}$ can also be an arbitrarily bad estimate of the optimal LQR feedback gain $\K_{\lqr}$ when the system is subject to noise.

 \section{Certainty Equivalent (CE) DDD LQR}
\label{sec_CE_DD_LQR}
\subsection{Inconsistency of CE DDD LQR}
\label{sec:InconsistencyCEDDDLQR}
Our first main result shows that in the presence of noise, with probability $1$, the solution of the certainty equivalence DDD LQR problem in \eqref{CEDDDLQRSDP} is independent of the data.

\begin{theorem}
\label{th:CE_inconsistency} Consider the LTI system in \eqref{LTI}.
Assume  {$\sigma_w>0$} and $T\geq (m+n)(n+1)+n.$ Then, for all optimal solutions $\Y_{ce}^*$ of \eqref{CEDDDLQRSDP}, the state feedback gain estimate $\K_{ce}$ given in \eqref{eq:kce} is unique and we have that
$$\Pb_T(\K_{ce}=\mathbf{0}_{m\times n})=1,$$
where the probability $\Pb_T$ is with respect to the randomness of $x_0$, $\U_0(T)$, and $\W_0(T)$.
\end{theorem}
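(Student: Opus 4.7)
The plan is to derive a data-independent lower bound on the SDP objective that is tight exactly when $\U_0\Y=\mathbf{0}$ (so that $\K_{ce}=\mathbf{0}$), and then to show, using a persistency-of-excitation argument, that this lower bound is attained almost surely. The intuition is that under noise the SDP is blind to the decomposition $\X_1=\A\X_0+\B\U_0+\W_0$ and cannot tell apart an input term $\B\U_0\Y$ from a noise term $\W_0\Y$; it therefore ``cheats'' by routing all the apparent control action into the unmodeled noise direction.

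For the lower bound, the $(1,1)$ block of the first LMI in \eqref{CEDDDLQRSDP} gives $\X_0\Y\succeq\I$, and since $\Q\succ 0$ this implies $\operatorname{tr}(\Q\X_0\Y)\geq\operatorname{tr}(\Q)$ with equality iff $\X_0\Y=\I$. Because $\X_0\Y\succ 0$, a Schur complement applied to the second LMI yields $\X\succeq\sqrt{\R}\,\U_0\Y(\X_0\Y)^{-1}(\U_0\Y)^\top\sqrt{\R}\succeq\mathbf{0}$, so $\operatorname{tr}(\X)\geq 0$ with equality iff $\U_0\Y=\mathbf{0}$ and $\X=\mathbf{0}$. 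Therefore the cost is at least $\operatorname{tr}(\Q)$, and any optimizer $\Y^*_{ce}$ must satisfy $\X_0\Y^*_{ce}=\I$ and $\U_0\Y^*_{ce}=\mathbf{0}$; the latter immediately forces $\K_{ce}=-\U_0\Y^*_{ce}(\X_0\Y^*_{ce})^{-1}=\mathbf{0}_{m\times n}$, uniquely, whenever the bound is attained.

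To show achievability, plugging $\X_0\Y^*=\I$, $\U_0\Y^*=\mathbf{0}$, $\X^*=\mathbf{0}$ into the first LMI forces the off-diagonal block $\X_1\Y^*$ to vanish, and invoking $\X_1=\A\X_0+\B\U_0+\W_0$ recasts these three requirements as the linear system
\[
\begin{bmatrix}\X_0\\\U_0\\\W_0\end{bmatrix}\Y^*=\begin{bmatrix}\I\\\mathbf{0}\\-\A\end{bmatrix},
\]
which admits a solution if and only if the stacked data matrix $[\X_0^\top\;\U_0^\top\;\W_0^\top]^\top\in\Rb^{(2n+m)\times T}$ has full row rank $2n+m$.

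To establish this rank almost surely, I view \eqref{LTI} as the LTI system $\x_{t+1}=\A\x_t+[\B\;\I_n][\bu_t^\top\;\bw_t^\top]^\top$ with augmented input of dimension $m+n$ and trivially controllable input matrix $[\B\;\I_n]$. A Willems-fundamental-lemma-style argument then reduces the full row rank of $[\X_0;\U_0;\W_0]$ to persistency of excitation of order $n+1$ for the augmented input sequence $(\bu_t,\bw_t)_{t=0}^{T-1}$, i.e., to the full row rank of the Hankel matrix $\mathcal{H}_{n+1}([\U_0;\W_0])\in\Rb^{(n+1)(m+n)\times(T-n)}$. The hypothesis $T\geq(m+n)(n+1)+n$ is exactly the dimensional condition $T-n\geq(n+1)(m+n)$, and because $(\bu_t,\bw_t)$ are jointly Gaussian with non-degenerate covariance, the non-full-rank event is the zero set of a polynomial in those Gaussian samples that is not identically zero (as seen already from the first Hankel column), hence has probability zero. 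The main obstacle is precisely this step: it demands combining a Willems-type transfer from augmented-input persistency of excitation to the row rank of $[\X_0;\U_0;\W_0]$ with a careful probabilistic argument that the resulting polynomial in the Gaussian samples is non-trivial; once the rank condition is in place, the Schur complements, trace inequalities, and identification $\K_{ce}=\mathbf{0}$ all follow algebraically.
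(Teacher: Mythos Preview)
Your proposal is correct and follows essentially the same approach as the paper: a trace/Schur-complement lower bound of $\operatorname{tr}(\Q)$ attained iff $\X_0\Y=\I$, $\U_0\Y=\mathbf{0}$, $\X_1\Y=\mathbf{0}$, combined with an almost-sure full-row-rank argument for the stacked data via the Willems fundamental lemma applied to the augmented controllable pair $(\A,[\B\;\I_n])$ and a Gaussian persistency-of-excitation statement. The only cosmetic difference is that the paper phrases the linear system in terms of $\X_1$ (i.e., $[\X_0;\U_0;\X_1]\Y=[\I;\mathbf{0};\mathbf{0}]$) and then passes to $[\X_0;\U_0;\W_0]$ through an invertible block-triangular matrix, whereas you go straight to $[\X_0;\U_0;\W_0]\Y=[\I;\mathbf{0};-\A]$; the two are equivalent.
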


Based on Theorem \ref{th:CE_inconsistency}, we have that when $T\geq (m+n)(n+1)+n$, the data-driven feedback gain based on certainty equivalence DDD LQR in \eqref{CEDDDLQRSDP} is equal to $\mathbf{0}_{m\times n}$ with probability one. Hence, $\K_{ce}$ will not converge to $\K_{\lqr}$ no matter how large $T$ is, i.e., $\K_{ce}$ is an inconsistent estimator of $\K_{\lqr}$. The proof of this theorem is given in Section \ref{ProofOfTheorem1}.

 \begin{remark} Results similar to
 Theorem \ref{th:CE_inconsistency} can be established for a linear system subject to measurement noise, that is:
 \begin{equation}
     \begin{aligned}
         \x_{t+1} &=\A\x_t +\B \bu_t\\
         \x^m_{t} &= \x_t + \mathbf{\delta}_t,
     \end{aligned}
 \end{equation}
 where $\mathbf{\delta}_t \overset{i.i.d.}{\sim} \mathcal{N}(0, \sigma_{\delta}^2\I_n)$ is the measurement noise. Now, if we form the data matrices in \eqref{data_matrix} using the measurements $\x^m_t$ in place of $\x_t$ and solve \eqref{CEDDDLQRSDP}, the resulting gain will again be zero with probability one. 
    \end{remark}

 Next, we present a corollary that provides an alternative interpretation of Lemma~\ref{lem:PreviousSensitiveLemma}.

\begin{corollary} \label{coro:PreviousGuaranteeViolation} Consider the matrix $\Psi$ defined in Lemma \ref{lem:PreviousSensitiveLemma}. Under the premises of Theorem \ref{th:CE_inconsistency}, we have that  $\Pb_T \left(\Psi = \A\A^\top\right)=1$ with $\A$ being the system matrix from \eqref{LTI}.
\end{corollary}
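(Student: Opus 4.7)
The plan is to leverage the fine structure of any optimizer $\Y_{ce}^*$ of \eqref{CEDDDLQRSDP} that is implicitly established by Theorem~\ref{th:CE_inconsistency}: namely, with probability one every optimizer must satisfy the three identities $\X_0 \Y_{ce}^* = \I_n$, $\U_0 \Y_{ce}^* = \mathbf{0}$, and $\X_1 \Y_{ce}^* = \mathbf{0}$. Once these are in hand, the corollary follows from a short algebraic substitution into the definition of $\Psi$. Note that the bare statement $\K_{ce} = \mathbf{0}_{m\times n}$ only yields $\U_0 \Y_{ce}^* = \mathbf{0}$, which is not enough.

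First I would justify the two extra identities through a Schur-complement/value-matching argument on the SDP. The first LMI of \eqref{CEDDDLQRSDP}, together with $\X_0 \Y \succ 0$, gives
\begin{equation*}
\X_0 \Y \succeq \I_n + \X_1 \Y (\X_0 \Y)^{-1} (\X_1 \Y)^\top \succeq \I_n,
\end{equation*}
so $\operatorname{trace}(\Q \X_0 \Y) \geq \operatorname{trace}(\Q)$ since $\Q \succ 0$; the second LMI forces $\X \succeq 0$ and hence $\operatorname{trace}(\X) \geq 0$. Thus the SDP objective is bounded below by $\operatorname{trace}(\Q)$. Under the hypothesis $T \geq (m+n)(n+1)+n$, a persistency-of-excitation/rank argument on the stacked matrix $[\X_0^\top\ \U_0^\top\ \W_0^\top]^\top$ -- the probabilistic workhorse of Theorem~\ref{th:CE_inconsistency}'s proof -- shows that almost surely the linear system $\X_0 \Y = \I_n$, $\U_0 \Y = \mathbf{0}$, $\X_1 \Y = \mathbf{0}$ in $\Y \in \Rb^{T \times n}$ admits a solution. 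Any such $\Y$, together with $\X = 0$, attains the lower bound $\operatorname{trace}(\Q)$, so every optimizer must saturate the Schur complements, forcing the three tight identities above.

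I would then plug these identities into the dynamics. Right-multiplying $\X_1 = \A \X_0 + \B \U_0 + \W_0$ by $\Y_{ce}^*$ gives
\begin{equation*}
\mathbf{0} = \X_1 \Y_{ce}^* = \A (\X_0 \Y_{ce}^*) + \B (\U_0 \Y_{ce}^*) + \W_0 \Y_{ce}^* = \A + \W_0 \Y_{ce}^*,
\end{equation*}
so $\W_0 \Y_{ce}^* = -\A$. Because $\X_0 \Y_{ce}^* = \I_n$, the matrix $\M$ from Lemma~\ref{lem:PreviousSensitiveLemma} simplifies to $\Y_{ce}^* (\Y_{ce}^*)^\top$, and hence
\begin{equation*}
\W_0 \M \W_0^\top = (\W_0 \Y_{ce}^*)(\W_0 \Y_{ce}^*)^\top = \A \A^\top,
\end{equation*}
while $\X_1 \M \W_0^\top = (\X_1 \Y_{ce}^*)(\Y_{ce}^*)^\top \W_0^\top = \mathbf{0}$ and symmetrically $\W_0 \M \X_1^\top = \mathbf{0}$. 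Substituting into the definition of $\Psi$ yields $\Psi = \A \A^\top$ on the probability-one event.

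The only non-trivial step is the first one: upgrading the statement $\K_{ce} = \mathbf{0}$ to the full tight form $\X_0 \Y_{ce}^* = \I_n$, $\U_0 \Y_{ce}^* = \mathbf{0}$, $\X_1 \Y_{ce}^* = \mathbf{0}$. This reduces to establishing almost-sure feasibility of the above linear system in $\Y$, which is essentially the same probabilistic/linear-algebraic ingredient that drives Theorem~\ref{th:CE_inconsistency} itself; given that, everything else is a mechanical substitution.
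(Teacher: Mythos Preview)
Your proposal is correct and follows essentially the same route as the paper. The paper invokes its Lemmas~\ref{FirstFullRowRank} and~\ref{GloballyOptimalOfNoiseless} to obtain the three identities $\X_0\Y_{ce}^*=\I_n$, $\U_0\Y_{ce}^*=\mathbf{0}$, $\X_1\Y_{ce}^*=\mathbf{0}$ almost surely, and then performs the same substitution $\W_0\Y_{ce}^*=-\A$ into the definition of $\Psi$; your Schur-complement/value-matching sketch is exactly the content of Lemma~\ref{GloballyOptimalOfNoiseless}, and your feasibility step is Lemma~\ref{FirstFullRowRank}.
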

    Corollary \ref{coro:PreviousGuaranteeViolation} shows that when the open-loop system in \eqref{LTI} is unstable, the inequality condition in Lemma \ref{lem:PreviousSensitiveLemma} is an event of probability zero, that is, it almost never holds. This is because when $\rho(\A)>1$, $\|\A\A^\top\|_2>\rho(\A)>1$. This implies that there does not exist $\eta\geq 1$ such that $ \Psi=\A\A^\top \preceq  \left( 1-\frac{1}{\eta}\right)\I_n$. Therefore, Lemma~\ref{lem:PreviousSensitiveLemma} essentially says that, when the open-loop system is stable, control gains being zero matrix is stabilizing, as expected. The proof of Corollary \ref{coro:PreviousGuaranteeViolation} is  provided in Section \ref{proof:Theorem2}.

\subsection{Proofs of Theorem~\ref{th:CE_inconsistency} and Corollary~\ref{coro:PreviousGuaranteeViolation}} \label{proof:Theorem2}

We start by defining a change of variables to obtain a noise-free LTI system by treating the noise as an additional input. To this end, consider
 \begin{equation}
     \x_{t+1} = \bar{\A} \x_t + \bar{\B} \mathbf{v}_t,
     \label{GeneralNoiselessLTI}
 \end{equation}
 where the system state $\x_t \in \Rb^n$, the input $\bv_t= [\bu_t\quad \bw_t]^\top \in \Rb^{m+n}$, $\bar{\A}=\A, $ and $ \bar{\B}=[\B \quad \I_n]$. Therefore, we can write $\X_1$ defined in \eqref{data_matrix} as
\begin{equation}
    \label{eq:relationDataMatrices}
    \X_1 = \bar{\A} \X_0 + \bar{\B}   \left[\begin{array}{c }
 \U_0   \\
 \W_0 
\end{array}\right]  .
\end{equation}
We denote the input trajectory matrix as $\V_0=\left[\begin{array}{cc}
 \U_0^\top & \W_0^\top
\end{array}\right]^\top$. We use the fundamental lemma, which first appeared in Corollary 2 of \cite{willems2005note}, in our proof. Its detailed proof can be found in \cite{van2020willems}. 
 \begin{lemma}[Fundamental Lemma for Input-State Data, Theorem 1 in \cite{van2020willems}] Let
       $T \in \Nbp$. Consider the state and input trajectory matrices $\X_0$ and $\V_0   $  of \eqref{GeneralNoiselessLTI}.  Assume $\V_0 $ is persistently exciting of order $1+n$  in the sense of Definition \ref{persistencyOfExcitation}. Then,
$$
\operatorname{rank}\left(\left[\begin{array}{l}
\X_0    \\
\V_0  
\end{array}\right]\right)= 2n+m.
$$
\label{FundamentalLemma}
 \end{lemma}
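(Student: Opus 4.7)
The plan is to proceed by contradiction. Suppose that $[\X_0^\top \; \V_0^\top]^\top$ is rank-deficient, so there exists a nonzero vector $(\alpha,\beta) \in \Rb^n \times \Rb^{m+n}$ with $\alpha^\top \x_t + \beta^\top \bv_t = 0$ for every $t \in \{0,\ldots,T-1\}$. My goal is to convert this single pointwise relation into a nontrivial left null vector of the Hankel matrix $\mathcal{H}_{n+1}(\V_0)$, which would contradict persistency of excitation of order $1+n$.

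The main technical step is a lifting via the system dynamics. Using the closed-form expression $\x_{t+k} = \bar{\A}^k \x_t + \sum_{i=0}^{k-1} \bar{\A}^{k-1-i} \bar{\B} \bv_{t+i}$, the relation at shifted time $t+k$ becomes
\begin{equation*}
(\bar{\A}^{\top k} \alpha)^\top \x_t + \sum_{i=0}^{k-1} (\bar{\A}^{k-1-i}\bar{\B})^\top \alpha \cdot \bv_{t+i} + \beta^\top \bv_{t+k} = 0.
\end{equation*}
Next, I would eliminate the $\x_t$ term by taking the linear combination over $k = 0,\ldots,n$ prescribed by the characteristic polynomial of $\bar{\A}$. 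By Cayley--Hamilton, the resulting coefficient of $\x_t$ vanishes, leaving a relation of the form $\sum_{k=0}^n \gamma_k^\top \bv_{t+k} = 0$ valid for $t = 0,\ldots,T-n-1$, with leading coefficient $\gamma_n = \beta$ because the $\lambda^n$ coefficient of the characteristic polynomial is $1$.

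The proof is then closed by case analysis. If $\beta \ne 0$, the stacked vector $[\gamma_0^\top,\ldots,\gamma_n^\top]$ is an explicit nonzero left null vector of $\mathcal{H}_{n+1}(\V_0)$, directly contradicting the persistency of excitation assumption. If $\beta = 0$ but $\alpha \ne 0$, the original null relation reduces to $\alpha^\top \x_t = 0$ for all $t$; unrolling against the dynamics and invoking full row rank of $\V_0$ (itself a consequence of PE of order at least $1$) forces $\alpha^\top \bar{\A}^k \bar{\B} = 0$ for every $k \ge 0$. Since $\bar{\B} = [\B \; \I_n]$ contains the identity as a block, the pair $(\bar{\A},\bar{\B})$ is automatically controllable and $\alpha$ must vanish, a contradiction. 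The case $\alpha = 0$ is immediate: PE of order $1$ already implies $\V_0$ has full row rank, forcing $\beta = 0$ too.

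The principal obstacle I anticipate is bookkeeping: one must carefully verify that the Cayley--Hamilton combination leaves $\gamma_n = \beta$ rather than collapsing to zero. This is precisely what guarantees that the derived Hankel relation is nontrivial and what couples the contradiction back to the assumed failure of row rank. The supporting pieces --- the closed-form state expansion and the controllability fallback through the $\I_n$ block of $\bar{\B}$ --- are routine once this central elimination is correctly tracked.
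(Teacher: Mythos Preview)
The paper does not prove this lemma; it quotes it verbatim from \cite{van2020willems} (see the sentence immediately preceding the lemma statement) and uses it as a black box inside Lemma~\ref{FirstFullRowRank}. Your proposal therefore goes beyond what the paper does by supplying a self-contained argument, and the route you choose --- assume a nonzero left null vector of $[\X_0^\top\;\V_0^\top]^\top$, lift the relation through the dynamics, and eliminate the state via a Cayley--Hamilton combination to manufacture a left null vector of $\mathcal{H}_{n+1}(\V_0)$ --- is essentially the standard proof of Willems' Fundamental Lemma in the state-space setting.

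One step deserves tightening. In the $\beta=0$ branch you claim that ``unrolling against the dynamics and invoking full row rank of $\V_0$'' forces $\alpha^\top\bar{\A}^k\bar{\B}=0$. But $\alpha^\top \x_{t+1}=0$ only gives $\alpha^\top\bar{\A}\x_t+\alpha^\top\bar{\B}\bv_t=0$, and $\alpha^\top\bar{\A}\x_t$ need not vanish just because $\alpha^\top\x_t=0$; full row rank of $\V_0$ by itself does not let you separate the two terms, and each further unroll shortens the usable time window. The cleanest fix is to stay inside the Cayley--Hamilton relation you already derived: with $\beta=0$ and $c_n=1$ the next-to-leading coefficient is $\gamma_{n-1}^\top=\alpha^\top\bar{\B}$, and since $\bar{\B}=[\B\;\I_n]$ has full row rank, $\alpha\neq 0$ already gives $\gamma_{n-1}\neq 0$, yielding the same Hankel contradiction as in the $\beta\neq 0$ case --- no separate argument is needed. (Equivalently, and without using the special block structure of $\bar{\B}$: if all $\gamma_j$ vanished, a short induction down $j=n-1,n-2,\ldots$ would give $\alpha^\top\bar{\A}^k\bar{\B}=0$ for $k=0,\ldots,n-1$, contradicting controllability of $(\bar{\A},\bar{\B})$.)
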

 The rest of our proof depends on the properties of the following combined data matrix
\begin{equation}
\label{combined_data_matrix}
\D_T := \left[\begin{array}{c }
\X_0  \\
\U_0  \\
\X_1
\end{array}\right].
\end{equation}
Next, based on Lemma \ref{FundamentalLemma}, we show that the combined data matrix $\D_T$ is full row-rank with probability one.
\begin{lemma}
\label{FirstFullRowRank}
Consider the LTI system in \eqref{LTI}.
Assume {$\sigma_w>0$}. When $T\geq (m+n)(n+1)+n$, the combined data matrix $\D_T$ in \eqref{combined_data_matrix} satisfies 
    \begin{equation}
    \label{Full_rank_data_matrix}
    \Pb_T \left( \operatorname{rank}\left( \D_T \right) = 2n+m\right) =1 .\end{equation}
    
\end{lemma}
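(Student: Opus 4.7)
The plan is to reduce the rank question for $\D_T$ to a persistency of excitation question for the augmented input sequence $\V_0$, and then invoke the Fundamental Lemma together with an almost-sure nonvanishing argument for polynomials in independent Gaussian variables.

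\textbf{Step 1: Relate $\D_T$ to the augmented data.} Using $\X_1=\A\X_0+\B\U_0+\W_0$, I would write
\begin{equation*}
\D_T=\begin{bmatrix}\X_0\\ \U_0\\ \X_1\end{bmatrix}
=\begin{bmatrix}\I_n & \mathbf{0} & \mathbf{0}\\ \mathbf{0} & \I_m & \mathbf{0}\\ \A & \B & \I_n\end{bmatrix}
\begin{bmatrix}\X_0\\ \U_0\\ \W_0\end{bmatrix}.
\end{equation*}
The left factor is square and block lower triangular with identity blocks on the diagonal, hence invertible. Consequently, $\operatorname{rank}(\D_T)=\operatorname{rank}\bigl(\bigl[\X_0^\top\ \V_0^\top\bigr]^\top\bigr)$, where $\V_0=[\U_0^\top\ \W_0^\top]^\top$ is the input matrix of the augmented noise-free system \eqref{GeneralNoiselessLTI}.

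\textbf{Step 2: Apply the Fundamental Lemma.} The augmented system is controllable because $\bar{\B}=[\B\ \I_n]$ already has full row rank $n$. By Lemma \ref{FundamentalLemma}, on the event where $\V_0$ is persistently exciting of order $1+n$, we have $\operatorname{rank}\bigl(\bigl[\X_0^\top\ \V_0^\top\bigr]^\top\bigr)=2n+m$. So it suffices to show that this event has probability one under the stated length condition $T\ge (m+n)(n+1)+n$.

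\textbf{Step 3: Almost-sure persistency of excitation of $\V_0$.} The Hankel matrix $\mathcal{H}_{1+n}(\V_0)\in\Rb^{(n+1)(m+n)\times(T-n)}$ has more columns than rows exactly when $T\ge (m+n)(n+1)+n$. Its entries are all entries of the vectors $\bv_0,\dots,\bv_{T-1}$, which by assumption are jointly independent and Gaussian with positive (hence non-degenerate) covariance $\operatorname{diag}(\sigma_u^2\I_m,\sigma_w^2\I_n)$. The determinant of any fixed $(n+1)(m+n)\times(n+1)(m+n)$ submatrix is a polynomial in these entries. If I exhibit one deterministic realization of $\V_0$ for which this polynomial is nonzero (for instance, a shifted impulse pattern that makes the Hankel matrix full row rank, or more directly any deterministic persistently exciting sequence), then the polynomial is not identically zero, and therefore its zero set has Lebesgue measure zero in $\Rb^{T(m+n)}$. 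Since the joint distribution of the entries is absolutely continuous with respect to Lebesgue measure, the event that the polynomial vanishes has probability zero. Hence $\mathcal{H}_{1+n}(\V_0)$ has full row rank with probability one, i.e., $\V_0$ is persistently exciting of order $1+n$ almost surely.

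\textbf{Main obstacle.} The only nontrivial step is the almost-sure persistency of excitation argument in Step~3. The core subtlety is that the Hankel entries are not all independent across columns (they are shifted copies), so one cannot directly claim that a generic matrix is full rank; instead one must argue at the level of the underlying independent Gaussian entries $\{\bv_t\}$, exhibit a concrete realization for which a chosen maximal minor is nonzero, and then conclude by absolute continuity. Once this is in place, combining Steps 1--3 yields $\Pb_T(\operatorname{rank}(\D_T)=2n+m)=1$, which is exactly \eqref{Full_rank_data_matrix}.
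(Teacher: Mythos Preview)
Your proposal is correct and follows essentially the same route as the paper: the factorization in Step~1 is exactly the paper's decomposition $\D_T=\bP_1\bar{\D}_T$ with $\bP_1$ nonsingular, Step~2 is the paper's appeal to the Fundamental Lemma, and Step~3 reproduces the appendix lemma on almost-sure persistency of excitation. If anything, your Step~3 is slightly more careful than the paper's, which asserts that the determinant has a continuous density rather than arguing directly (as you do) that the zero set of a nonzero polynomial has Lebesgue measure zero and that the joint Gaussian law is absolutely continuous.
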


\begin{proof}
By \eqref{eq:relationDataMatrices}, we get
    \begin{equation}
      \D_T =  \bP_1  \bar{\D}_T,
\label{TwoDataMatrices}
      \end{equation}
      where $\bP_1 := \left[\begin{array}{ccc}
\I_n & \mathbf{0}_{n\times m} & \mathbf{0}_{n\times n} \\
\mathbf{0}_{m\times n} & \I_m & \mathbf{0}_{m\times n} \\
\A &\B &\I_n  
\end{array}\right]$ is a nonsingular matrix and $\bar{\D}_T := 
\left[\begin{array}{ccc}
\X_0^\top &
 \U_0^\top &
 \W_0^\top
\end{array}\right]^\top$.
$\bar{\D}_T$ can be considered as an input-state data matrix of the noise-free LTI system \eqref{GeneralNoiselessLTI}.
Note that $(\bar{\A},\bar{\B})$ defined in \eqref{GeneralNoiselessLTI} is controllable for any $\A$ and $\B$. Moreover, as we show in Lemma~\ref{persistentinputnoise} in the Appendix, with probability one, $\V_0$ is persistently exciting of order $n+1$ when $T\geq (m+n)(n+1)+n$. Hence, by Lemma \ref{FundamentalLemma}, we get that  
$\mathbb{P}_T\left(\operatorname{rank}\left(\bar{\mathbf{D}}_T \right)=2n+m\right)=1
$. Noting nonsingularity of $\bP_1$ completes the proof. 
\end{proof}

 The following lemma shows that any optimal solution of the CE DDD LQR problem \eqref{CEDDDLQRSDP} has to satisfy a linear equation, which yields a zero state feedback gain when there is noise, i.e., $\sigma_w>0$.
\begin{lemma}
Assume the combined data matrix $\D_T$ is full row rank, i.e., $\operatorname{rank}\left( \D_T \right) = 2n+m$. Consider the following underdetermined systems of equations: 
     \begin{equation}
         \D_T \Y=\left[\begin{array}{c }
\I_n  \\
 \mathbf{0}_{m\times n} \\
\mathbf{0}_{n\times n}
\end{array}\right].
         \label{OptimalSolutionEquation}
     \end{equation}
   Then, $\Y^*$ is an optimal solution of \eqref{CEDDDLQRSDP} if and only if $\Y^*$ satisfies \eqref{OptimalSolutionEquation}. 
     \label{GloballyOptimalOfNoiseless}
\end{lemma}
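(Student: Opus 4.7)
The plan is a direct primal-side argument: exhibit the value $\operatorname{trace}(\Q)$ as a lower bound that is attained exactly when $\D_T \Y = \begin{bmatrix} \I_n^\top & \mathbf{0}_{m\times n}^\top & \mathbf{0}_{n\times n}^\top \end{bmatrix}^\top$. First, I would note that since $\D_T$ has full row rank $2n+m$ by assumption, the underdetermined linear system \eqref{OptimalSolutionEquation} admits at least one solution, so there is no feasibility issue.

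Next, I would establish the lower bound. For any feasible $(\X,\Y)$, the first LMI forces its (1,1) block $\X_0\Y - \I_n$ to be positive semidefinite, hence $\X_0 \Y \succeq \I_n$, and the second LMI forces $\X \succeq 0$. Using $\Q \succ 0$ together with the cyclicity of the trace and the fact that $\operatorname{trace}(\Q \M) \geq 0$ when $\Q \succ 0$ and $\M \succeq 0$, one obtains
\begin{equation*}
\operatorname{trace}(\Q \X_0 \Y) + \operatorname{trace}(\X) \;=\; \operatorname{trace}(\Q) + \operatorname{trace}\bigl(\Q(\X_0\Y - \I_n)\bigr) + \operatorname{trace}(\X) \;\geq\; \operatorname{trace}(\Q).
\end{equation*}
Conversely, for any $\Y^*$ satisfying \eqref{OptimalSolutionEquation} the choice $\X = \mathbf{0}_{m\times m}$ makes both LMIs block-diagonal and positive semidefinite (the blocks are $\mathbf{0}$ and $\I_n$, and $\mathbf{0}$ and $\I_n$, respectively), so $(\mathbf{0},\Y^*)$ is feasible with objective value exactly $\operatorname{trace}(\Q \cdot \I_n) = \operatorname{trace}(\Q)$. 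This proves the "if" direction and identifies the optimal value.

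For the "only if" direction, suppose $(\X^*,\Y^*)$ is optimal, so it attains the bound. Then the two nonnegative contributions above must both vanish: $\operatorname{trace}\bigl(\Q(\X_0\Y^* - \I_n)\bigr) = 0$ and $\operatorname{trace}(\X^*) = 0$. Since $\Q \succ 0$ and $\X_0\Y^* - \I_n \succeq 0$, the first equality forces $\X_0\Y^* = \I_n$; since $\X^* \succeq 0$, the second forces $\X^* = \mathbf{0}_{m\times m}$. Substituting $\X^* = \mathbf{0}$ into the second LMI, the standard fact that a PSD matrix with a zero diagonal block has vanishing corresponding off-diagonal block gives $\sqrt{\R}\U_0 \Y^* = \mathbf{0}$, whence $\U_0 \Y^* = \mathbf{0}$ by $\R \succ 0$. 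Similarly, substituting $\X_0\Y^* - \I_n = \mathbf{0}$ into the first LMI yields $\X_1 \Y^* = \mathbf{0}$. Stacking these three equalities gives \eqref{OptimalSolutionEquation}.

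The only place one must be careful is the "zero diagonal block implies zero off-diagonal block" step, which is a one-line Schur/Cauchy--Schwarz argument but is the linchpin that converts the trace-equality conditions into the three separate identities $\X_0\Y^* = \I_n$, $\U_0\Y^* = \mathbf{0}$, $\X_1\Y^* = \mathbf{0}$ required by \eqref{OptimalSolutionEquation}. No use of the specific structure of $\A$, $\B$, or the noise is needed in this lemma; only the strict positive definiteness of $\Q$ and $\R$ and the full-row-rank hypothesis on $\D_T$ enter.
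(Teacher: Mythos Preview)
Your proposal is correct and follows essentially the same primal-side strategy as the paper: establish $\operatorname{trace}(\Q)$ as a lower bound on the objective, show it is attained precisely by solutions of \eqref{OptimalSolutionEquation}, and then argue that optimality forces $\X_0\Y^*=\I_n$, $\U_0\Y^*=\mathbf{0}$, $\X_1\Y^*=\mathbf{0}$. The only cosmetic difference is that the paper first eliminates $\X$ via a Schur complement to obtain an equivalent problem in $\Y$ alone and then argues by contradiction for $\X_0\Y^*=\I_n$, whereas you keep $(\X,\Y)$ and use the ``zero diagonal block $\Rightarrow$ zero off-diagonal block'' property of PSD matrices directly; your route is slightly more streamlined but not materially different.
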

\begin{proof}
We note that since $\operatorname{rank}\left( \D_T \right) = 2n+m$ and $\Y\in\Rb^{T\times n}$, Equation \eqref{OptimalSolutionEquation} always has a solution. 

In the first part of the proof, we prove that any solution of
\eqref{OptimalSolutionEquation} is an optimal solution of \eqref{CEDDDLQRSDP}. 
By Schur complement, we can rewrite the first constraint in \eqref{CEDDDLQRSDP} as 
$\X_{0} \Y - \I_n \succeq 0
$ and $\X_{0} \Y - \I_n - \X_{1}  \Y \left( \X_{0} \Y\right)^{-1}  \Y^{\top} \X_{1}^{\top}\succeq 0$. Similarly, the second constraint can be written  as $\X \succeq 0$ and $\X - \sqrt{\R} \U_0 \Y \left( \X_0 \Y\right)^{-1}  (\sqrt{\R} \U_0 \Y )^{\top} \succeq 0.$ Since $\operatorname{trace}\left(\X \right)$ is being minimized, by the last inequality, the following will be satisfied at the optimal solution: 
$$\operatorname{trace}\left(\X \right) =\operatorname{trace}\left( \sqrt{\R} \U_0 \Y \left( \X_0 \Y\right)^{-1}  \left(\sqrt{\R} \U_0 \Y \right)^{\top} \right).$$ Hence, we can remove $\X$ from \eqref{CEDDDLQRSDP} to obtain an equivalent optimization problem:
 \begin{equation}
\begin{aligned} 
\underset{  \Y }{\operatorname{minimize}} &  \operatorname{trace}\left(\Q \X_0 \Y  \right)+\\
&\operatorname{trace}\left(  \sqrt{\R} \U_0 \Y \left( \X_0 \Y\right)^{-1}  \left(\sqrt{\R} \U_0 \Y \right)^{\top} \right) \\
\text { subject to } 
& \X_{0} \Y - \I_n \succeq 0\\
&\X_{0} \Y - \I_n - \X_{1}  \Y \left( \X_{0} \Y\right)^{-1}  \Y^{\top} \X_{1}^{\top}\succeq 0 ,
\end{aligned}
\label{CEDDDLQRNonconvex}
\end{equation}
where $\Y\in \Rb^{T \times n}$. For the simplicity of the notation, we use $O_{ce}\left( \Y\right)$ to denote the objective function of the above optimization problem.

Given $\X_{0} \Y - \I_n \succeq 0$, we see that for any feasible solution $\Y$ of \eqref{CEDDDLQRNonconvex}, we have $\operatorname{trace}\left(\Q \X_0 \Y  \right) \geq \operatorname{trace}\left(\Q   \right) $.
Moreover, 
    $   \sqrt{\R} \U_0 \Y \left( \X_0 \Y\right)^{-1}  (\sqrt{\R} \U_0 \Y )^{\top}\succeq 0$ implies that
    $O_{ce}\left( \Y\right) \geq \operatorname{trace}\left(\Q \right)$ for any feasible solution $\Y$ of \eqref{CEDDDLQRNonconvex}. Therefore, if there exists a feasible solution $\Y_{m}$ such that $ O_{ce}\left( \Y_{m}\right)= \operatorname{trace}\left(\Q \right)$, then $\Y_{m}$ is an optimal solution of \eqref{CEDDDLQRNonconvex}. Take a solution $\Y^*$ of \eqref{OptimalSolutionEquation}, then
      two matrix inequality constraints of \eqref{CEDDDLQRNonconvex} are satisfied with equality and we also have
      $O_{ce}\left( \Y^*\right) = \operatorname{trace}\left(\Q \X_0\Y^* \right)= \operatorname{trace}\left(\Q \I_n \right)= \operatorname{trace}\left(\Q \right).$ Therefore, any solution of \eqref{OptimalSolutionEquation} is an optimal solution of \eqref{CEDDDLQRNonconvex} and the optimal objective value of   \eqref{CEDDDLQRNonconvex} is $\operatorname{trace}\left(\Q \right)$.
      
 In the second part of the proof, we will prove that any optimal solution $\Y_{ce}^*$ of   \eqref{CEDDDLQRNonconvex} must satisfy \eqref{OptimalSolutionEquation}. Assume by contradiction that  $\X_{0} \Y^*_{ce} \neq \I_n$. According to the first matrix inequality constraint in \eqref{CEDDDLQRNonconvex},  $\X_{0} \Y^*_{ce} \neq \I_n$ implies that there exists $\C_1 \succeq 0$ with $\C_1\neq 0$ such that $\X_0 \Y^*_{ce} = \C_1 + \I_n$. Hence, 
\begin{equation}
\label{ObjectiveIneq1}
\begin{aligned}
    O_{ce}(\Y^*_{ce}) \geq \operatorname{trace}\left(\Q \left(\C_1+\I_n \right) \right).
    \end{aligned}
\end{equation}
Since $\Q \succ 0$ and $\C_1 \succeq 0$ with $\C_1 \neq 0$ , we have that $\operatorname{trace}\left(\Q  \C_1 \right)$ is strictly positive. Then from \eqref{ObjectiveIneq1}, we have that $ O_{ce}(\Y^*_{ce}) >  \operatorname{trace}\left(\Q \right),$ which contradicts with the fact that the optimal value of \eqref{CEDDDLQRNonconvex} is $\operatorname{trace}\left(\Q \right)$.
Therefore, we conclude that  $\X_0 \Y^*_{ce} = \I_n$ must hold. Based on the second constraint in \eqref{CEDDDLQRNonconvex}, $\X_0 \Y^*_{ce} = \I_n$ implies that $\X_1 \Y^*_{ce} = \mathbf{0}_{n\times n}$ must hold. According to the objective function definition in \eqref{CEDDDLQRNonconvex} and the fact that the optimal objective value of \eqref{CEDDDLQRNonconvex} is $\operatorname{trace}\left(\Q \right)$, we necessarily have $\U_0 \Y^*_{ce} = \mathbf{0}_{m \times n}$.
In conclusion, any optimal solution of \eqref{CEDDDLQRNonconvex} satisfies \eqref{OptimalSolutionEquation}.
\end{proof}



When $\sigma_w =0,$ the noise matrix $\W_0$ in $\bar{\D}_T$ becomes a zero matrix. 
Therefore, Equation \eqref{OptimalSolutionEquation} reduces to 

\begin{equation}
\begin{aligned}\left[\begin{array}{c }
\X_0  \\
\U_0 \\
\A\X_0+\B\U_0
\end{array}\right] \Y = \left[\begin{array}{c }
\I_n  \\
 \mathbf{0}_{m\times n} \\
\mathbf{0}_{n\times n}
\end{array}\right].
\end{aligned}
\label{noiseless_optimal_equation}
\end{equation}
The above equation has a solution only if $\A=\mathbf{0}_{n\times n}.$ Therefore, in the noiseless case, a trivial solution cannot be constructed by \eqref{OptimalSolutionEquation} in general.

On the other hand, when there is noise, last block row of $\D_T$ will be perturbed by noise and for many noise distributions $\D_T$ will be full rank, leading to a zero gain as the output of the optimization problem~\eqref{CEDDDLQRSDP}. This is the discontinuity in the solution that was alluded to in Fig.~\ref{diagram}.

We also note that
if the input $\bu_t$ is {i.i.d.} and from a continuous distribution; and noise $\bw_t$ comes from any continuous distribution, it is {i.i.d.} and independent of the input, the combined data matrix $\D_T$ will have full row-rank with probability one. So, a trivial controller gain will be obtained for any such input/noise distribution as well.

Next, we give the proof of Theorem \ref{th:CE_inconsistency}. \label{ProofOfTheorem1}
\begin{proof} (of Theorem \ref{th:CE_inconsistency})
   By the definition of $\K_{ce}$, Lemma \ref{FirstFullRowRank}, and Lemma \ref{GloballyOptimalOfNoiseless}, any optimal solution of \eqref{CEDDDLQRSDP} yields the unique $\K_{ce}=\mathbf{0}_{m\times n}$ when $T\geq (m+n)(n+1)+n$, which holds true with probability one. This completes the proof.
\end{proof}

Now, we can prove Corollary~\ref{coro:PreviousGuaranteeViolation}.

\begin{proof} (of Corollary~\ref{coro:PreviousGuaranteeViolation})
  Under the assumptions of Theorem \ref{th:CE_inconsistency},  by Lemma \ref{FirstFullRowRank} and Lemma \ref{GloballyOptimalOfNoiseless}, we get that for an optimal solution $\Y_{ce}^*$ of CE DDD LQR, $$ \begin{cases}\X_0 \Y_{ce}^*=\I_n\\
  \U_0\Y_{ce}^*=\mathbf{0}_{m\times n} \\
  \X_1 \Y_{ce}^*=\mathbf{0}_{n\times n}\end{cases},$$ 
with probability $1$. Then based on \eqref{eq:relationDataMatrices}, this implies 
    $$
    \begin{aligned}
         \W_0 \Y_{ce}^*=\left( \X_1 - \A  \X_0 - \B  \U_0 \right)  \Y_{ce}^* = -\A,
    \end{aligned}
    $$ 
  with probability $1$.  Therefore, by the definition of $\Psi $ in Lemma \ref{lem:PreviousSensitiveLemma}, we conclude that $\Pb_T \left(\Psi = \A\A^\top\right)=1$.
\end{proof}

 \section{ Robustness-Promoting (RP) DDD LQR}
 \label{sec_RP_DDD_LQR}
 \subsection{ Inconsistency of RP DDD LQR}
 \label{InconsistencyrobustnessPromoting}
Next, we analyze the solution of RP DDD LQR problem in \eqref{RPDDDLQRSDP} as the data trajectory length $T$ goes to infinity. 
\begin{theorem}
Consider the LTI system in \eqref{LTI} and assume that {$\sigma_w>0$}. Let $ \Y^*_{rp}(T)$ be an arbitrary optimal solution of \eqref{RPDDDLQRSDP} for a given $T$ and let $\K_{rp}(T) =-\U_0(T) \Y^*_{rp}(T) \left(\X_0(T) \Y^*_{rp}(T) \right)^{-1}$ be the corresponding feedback control gain as defined in \eqref{eq:Krp}.
Then,
\begin{equation}
\K_{rp}(T) \overset{p}{\rightarrow} \mathbf{0}_{m\times n}.
\end{equation}
    \label{th:RP_inconsistency}
\end{theorem}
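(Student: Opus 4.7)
The overall strategy is to use the regularization term $\eta\,\operatorname{trace}(\Sb)$ together with the structure established in Theorem~\ref{th:CE_inconsistency} to force the optimal gain to vanish. My plan is to construct an explicit feasible point of~\eqref{RPDDDLQRSDP} whose objective value converges in probability to $\operatorname{trace}(\Q)$, and then exploit the three LMIs at the optimum to translate this bound into $\K_{rp}(T) \xrightarrow{p} \mathbf{0}_{m\times n}$.

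I would proceed in three steps. \emph{First}, I would take $\Y_0(T)$ to be the minimum-norm solution of equation~\eqref{OptimalSolutionEquation}; this exists with probability one once $T \geq (m+n)(n+1)+n$ by Lemma~\ref{FirstFullRowRank}. Since $\X_0\Y_0 = \I_n$, $\U_0\Y_0 = \mathbf{0}$ and $\X_1\Y_0 = \mathbf{0}$, a routine Schur-complement check shows that $(\Y_0,\,\X = \mathbf{0},\,\Sb = \Y_0\Y_0^\top)$ is feasible for~\eqref{RPDDDLQRSDP}, and its objective equals $\operatorname{trace}(\Q) + \eta\|\Y_0\|_F^2$. \emph{Second}, using the factorization $\D_T = \bP_1\bar{\D}_T$ from the proof of Lemma~\ref{FirstFullRowRank}, I would reduce the squared norm to
\[
\|\Y_0\|_F^2 = \operatorname{trace}\!\left(\begin{bmatrix}\I_n & \mathbf{0} & -\A^\top\end{bmatrix}(\bar{\D}_T\bar{\D}_T^\top)^{-1}\begin{bmatrix}\I_n\\ \mathbf{0}\\ -\A\end{bmatrix}\right)\;\leq\;\frac{n+\|\A\|_F^2}{\lambda_{\min}(\bar{\D}_T\bar{\D}_T^\top)},
\]
so that it suffices to show $\lambda_{\min}(\bar{\D}_T\bar{\D}_T^\top)\xrightarrow{p}\infty$. \emph{Third}, I would combine the upper bound $\operatorname{trace}(\Q) + \eta\|\Y_0\|_F^2$ on the optimum with the lower bounds $\operatorname{trace}(\Q\X_0\Y^*)\geq\operatorname{trace}(\Q)$ (from $\X_0\Y^*\succeq\I_n$) and $\operatorname{trace}(\X^*),\,\operatorname{trace}(\Sb^*)\geq 0$ to deduce $\operatorname{trace}(\X^*)\xrightarrow{p}0$. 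Setting $\bP := \X_0\Y^*$ (symmetric with $\bP\succeq\I_n$ by the first LMI) and $\G := \Y^*\bP^{-1}$, so that $\K_{rp} = -\U_0\G$, the identity $\U_0\Y^*(\X_0\Y^*)^{-1}\Y^{*\top}\U_0^\top = \K_{rp}\bP\K_{rp}^\top \succeq \K_{rp}\K_{rp}^\top$ together with the Schur form of the second LMI would give $\operatorname{trace}(\X^*)\geq \lambda_{\min}(\R)\|\K_{rp}\|_F^2$, which finishes the proof.

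The hard part will be the second step, specifically proving $\lambda_{\min}(\bar{\D}_T\bar{\D}_T^\top)\xrightarrow{p}\infty$. The exogenous $(m+n)\times(m+n)$ subblock $\V_0\V_0^\top$, where $\V_0 := [\U_0^\top\ \W_0^\top]^\top$, is handled immediately by the strong law of large numbers, which gives $\tfrac{1}{T}\V_0\V_0^\top \to \operatorname{diag}(\sigma_u^2\I_m,\sigma_w^2\I_n)\succ\mathbf{0}$ and hence a minimum eigenvalue growing linearly in $T$. The genuine difficulty is that when $\A$ is unstable the state Gramian $\X_0\X_0^\top$ can grow super-linearly, so a naive diagonal bound fails and the cross block $\X_0\V_0^\top$ must be controlled carefully via a Schur complement. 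To handle this uniformly in the spectrum of $\A$, I plan to exploit the conditional Gaussian structure of the dynamics: for each $t\geq 1$, $\x_t$ given the past has conditional covariance $\sigma_w^2\I_n$, which provides a uniform noise floor on each rank-one increment of $\X_0\X_0^\top$; combined with a standard $\varepsilon$-net argument on the unit sphere, this should yield $\lambda_{\min}(\bar{\D}_T\bar{\D}_T^\top)\gtrsim T$ with probability tending to one.
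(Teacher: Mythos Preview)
Your proposal is correct and follows the same three-part scaffold as the paper: build a feasible point from the minimum-norm solution of~\eqref{OptimalSolutionEquation} to pin the optimal value near $\operatorname{trace}(\Q)$, use nonnegativity of the remaining objective terms to squeeze, and read off $\K_{rp}\to\mathbf{0}$ via the second LMI. Your Step~3 is in fact slightly cleaner than the paper's: you bound $\|\K_{rp}\|_F^2$ directly through $\operatorname{trace}(\X^*)$ and the identity $\U_0\Y^*(\X_0\Y^*)^{-1}\Y^{*\top}\U_0^\top=\K_{rp}\bP\K_{rp}^\top\succeq\K_{rp}\K_{rp}^\top$, whereas the paper separately controls $\overline{\sigma}(\X_0\Y^*)$ and $\overline{\sigma}(\U_0\Y^*)$ before assembling the gain bound.

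The only substantive divergence is in Step~2, the growth of $\lambda_{\min}(\bar{\D}_T\bar{\D}_T^\top)$. The paper avoids your conditional-Gaussian/$\varepsilon$-net analysis of the state covariates entirely: it rewrites $\D_T=\bP_2\tilde{\D}_T$ with $\tilde{\D}_T$ the input--state data matrix of the auxiliary noise-free system~\eqref{NoiselessLTI_isometric_input}, invokes the quantitative fundamental lemma of Coulson et al.\ (Lemma~\ref{LowerLeastSingualeDataMatrices}) to reduce $\underline{\sigma}(\tilde{\D}_T)$ to $\underline{\sigma}(\Hc_{n+1}(\Z_0))$, and then applies a known Gaussian Hankel singular-value bound (Lemma~\ref{LowerBoundOfGaussianInput}) to the i.i.d.\ exogenous sequence $\Z_0$. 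This yields $\underline{\sigma}(\D_T)\gtrsim\sqrt{T}$ with an explicit probability tail, uniformly in the spectral radius of $\A$, essentially for free. Your route is valid but harder: the martingale-plus-net argument for unstable $\A$ is genuinely nontrivial (the cross block $\X_0\V_0^\top$ does not vanish after normalization when the state blows up), and carrying it out would amount to reproducing a block-martingale small-ball analysis. The paper's approach buys a short proof by outsourcing exactly this difficulty to off-the-shelf lemmas.
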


Theorem~\ref{th:RP_inconsistency} says that when the length of the data trajectory $T$  approaches infinity, the data-driven feedback gain based on RP DDD LQR in \eqref{RPDDDLQRSDP} asymptotically converges to $\mathbf{0}_{m\times n}$ in probability, similar to the non-asymptotic almost sure result for CE DDD LQR. Therefore, robustness promoting regularization does not completely eliminate the sensitivity to noise. The proof of this theorem can be found in Section \ref{proofofMainResultForRobustnessPromotion}.

\subsection{Proof of Theorem~\ref{th:RP_inconsistency}}

\label{proofofMainResultForRobustnessPromotion}

Similar to the proof structure in Section \ref{proof:Theorem2}, we first do a change of variables to define a noise-free system with isotropic random inputs:
\begin{equation}
     \x_{t+1} = \tilde{\A} \x_t + \tilde{\B} \z_t,
     \label{NoiselessLTI_isometric_input}
 \end{equation}
  where the system state $\x_t \in \Rb^n$, the input $\z_t=[ \bu_t\quad \frac{\sigma_u}{\sigma_w } \bw_t]^\top \overset{i.i.d.}{\sim} \mathcal{N}\left(0,  {\sigma_u^2} \I_{m+n}\right) \in\Rb^{m+n}$, $\tilde{\A}:=\A$, and $\tilde{\B}:= [ \B \quad \frac{\sigma_w}{\sigma_u} \I_n]$.  Hence, the trajectory matrices in \eqref{data_matrix} are related by
\begin{equation}
    \label{eq:relationDataMatrices_isotropic}
    \X_1 = \tilde{\A} \X_0 + \tilde{\B}   \left[\begin{array}{c }
   \U_0   \\
 \frac{\sigma_u}{\sigma_w} \W_0 
\end{array}\right].
\end{equation}
Let the input trajectory matrix in \eqref{eq:relationDataMatrices_isotropic} be denoted as $\Z_0=\left[\begin{array}{cc}
 \U_0^\top &
 \frac{\sigma_u}{\sigma_w} \W_0^\top
\end{array}\right]^\top$. The following lemma, which combines Lemma 1 and Theorem 3 in \cite{coulson2022quantitative}, provides a quantitative measure of persistency of excitation in Lemma~\ref{FundamentalLemma}.
\begin{lemma} \label{LowerLeastSingualeDataMatrices} 
 Consider the state and input trajectories $\X_0$ and $\Z_0   $  of \eqref{NoiselessLTI_isometric_input}. 
 Then, there exists a positive constant $\rho$, which is determined only by $(\tilde{\A}, \tilde{\B})$,
   such that
        \begin{equation}\label{eq:quant_FL}
        \begin{aligned}
&\underline{\sigma}\left( \left[\begin{array}{c}
\X_0  \\
\Z_0
\end{array}\right] \right) \geq  \underline{\sigma} \left(\mathcal{H}_{n+1}\left(\Z_0\right)\right) \frac{\rho}{\sqrt{n+1}}.
\end{aligned}
\end{equation}
   
\end{lemma}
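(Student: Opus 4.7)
The plan is to establish \eqref{eq:quant_FL} by chaining two singular-value estimates: a state-space factorization that encodes the controllability of $(\tilde{\A},\tilde{\B})$ into the constant $\rho$, and an averaging step that produces the $1/\sqrt{n+1}$ factor when passing from the block-Hankel matrix back to the zero-order data matrix.

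For the first step I would unroll the recursion \eqref{NoiselessLTI_isometric_input} over $n+1$ consecutive time steps to obtain the factorization
\[
\mathcal{H}_{n+1}\!\left(\left[\begin{array}{c}\X_0\\ \Z_0\end{array}\right]\right)=\Gamma\left[\begin{array}{c}\X_0^{(T-n)}\\ \mathcal{H}_{n+1}(\Z_0)\end{array}\right],
\]
where $\X_0^{(T-n)}$ consists of the first $T-n$ columns of $\X_0$ and $\Gamma$ is a block-triangular matrix whose nonzero entries are $\I$ or products $\tilde{\A}^i\tilde{\B}$, depending only on $(\tilde{\A},\tilde{\B})$. Because $\tilde{\B}$ contains the scaled identity $\tfrac{\sigma_w}{\sigma_u}\I_n$, the pair $(\tilde{\A},\tilde{\B})$ is controllable in $n$ steps and $\Gamma$ is injective. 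The quantitative version of this factorization---precisely the content of Lemma~1 and Theorem~3 in \cite{coulson2022quantitative}---then yields $\underline{\sigma}(\mathcal{H}_{n+1}([\X_0;\Z_0]))\geq\rho\,\underline{\sigma}(\mathcal{H}_{n+1}(\Z_0))$ with $\rho>0$ determined solely by $(\tilde{\A},\tilde{\B})$.

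For the second step I would exhibit a specific unit vector certifying the reverse-type inequality $\underline{\sigma}(\mathcal{H}_{n+1}([\X_0;\Z_0]))\leq\sqrt{n+1}\,\underline{\sigma}([\X_0;\Z_0])$. Let $\alpha\in\Rb^{2n+m}$ be a unit left singular vector of $[\X_0;\Z_0]$ attaining its least singular value; then $y=(\alpha^\top,\ldots,\alpha^\top)^\top/\sqrt{n+1}\in\Rb^{(n+1)(2n+m)}$ is unit-norm, and a Cauchy--Schwarz estimate on the $n+1$ consecutive inner products $\alpha^\top[\x_{k+j}^\top,\z_{k+j}^\top]^\top$ gives $\|y^\top\mathcal{H}_{n+1}([\X_0;\Z_0])\|^2\leq (n+1)\,\underline{\sigma}([\X_0;\Z_0])^2$. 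Minimizing over unit vectors establishes the claim, and combining with the first step delivers \eqref{eq:quant_FL}.

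The main technical obstacle lies in the first step: a naive row-submatrix argument fails because $\mathcal{H}_{n+1}(\Z_0)$ is a principal row-submatrix of $[\X_0^{(T-n)};\mathcal{H}_{n+1}(\Z_0)]$, so Cauchy interlacing on the associated Gram matrices only gives the opposite-direction inequality $\underline{\sigma}([\X_0^{(T-n)};\mathcal{H}_{n+1}(\Z_0)])\leq\underline{\sigma}(\mathcal{H}_{n+1}(\Z_0))$. The correct-direction lower bound on $\underline{\sigma}(\Gamma\,[\X_0^{(T-n)};\mathcal{H}_{n+1}(\Z_0)])$ is recovered by exploiting that the extra state rows are predictable from past inputs through the controllability Gramian, so that $\Gamma$ acts as a bounded linear embedding producing the system-only constant $\rho=\rho(\tilde{\A},\tilde{\B})$ independent of $T$ and of the noise realization.
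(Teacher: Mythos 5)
There is a genuine gap, and in fact a step that is provably false. First, for context: the paper does not prove this lemma at all --- it imports it verbatim as a combination of Lemma~1 and Theorem~3 of \cite{coulson2022quantitative} --- so the relevant question is whether your argument stands on its own, and it does not. The fatal problem is your choice of intermediate quantity $\underline{\sigma}\bigl(\mathcal{H}_{n+1}([\X_0;\Z_0])\bigr)$. The deep Hankel matrix of the \emph{joint} state--input trajectory is always row-rank-deficient: it has $(n+1)(2n+m)$ rows, but your own factorization shows its row space is contained in that of $[\X_0^{(T-n)};\mathcal{H}_{n+1}(\Z_0)]$, which has only $n+(n+1)(n+m)$ rows, leaving a deficit of $n^2>0$. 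Hence $\underline{\sigma}\bigl(\mathcal{H}_{n+1}([\X_0;\Z_0])\bigr)=0$ identically, your Step~1 inequality $\underline{\sigma}(\mathcal{H}_{n+1}([\X_0;\Z_0]))\geq\rho\,\underline{\sigma}(\mathcal{H}_{n+1}(\Z_0))$ is false whenever the right-hand side is positive, and the chain through Step~2 degenerates to the vacuous bound $\underline{\sigma}([\X_0;\Z_0])\geq 0$. (Your Step~2 averaging estimate is correct as a standalone inequality, but it is applied to a quantity that vanishes.)

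Even if one repairs this by never stacking the state into a deep Hankel, the remaining argument is circular. The factorization $\mathcal{H}_{n+1}([\X_0;\Z_0])=\Gamma\,[\X_0^{(T-n)};\mathcal{H}_{n+1}(\Z_0)]$ with $\Gamma$ injective only reduces the claim to lower-bounding $\underline{\sigma}\bigl([\X_0^{(T-n)};\mathcal{H}_{n+1}(\Z_0)]\bigr)$ by a system constant times $\underline{\sigma}(\mathcal{H}_{n+1}(\Z_0))$ --- a statement of exactly the same type as the lemma itself (state rows appended to an input Hankel), and one which, as you correctly observe, interlacing bounds in the \emph{wrong} direction. Your proposed remedy, that ``the extra state rows are predictable from past inputs through the controllability Gramian,'' is the right intuition but is not a proof: the first $n$ columns of $\X_0$ depend on the unknown $\x_0$ and are not functions of past inputs, and for later columns the relation $\x_k=\tilde{\A}^n\x_{k-n}+\mathcal{C}[\z_{k-n};\ldots;\z_{k-1}]$ reintroduces a state, so the reduction never terminates. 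The actual mechanism in \cite{coulson2022quantitative} is different: one keeps only the single block row of states, restricts to a suitable window of columns, and uses a right inverse of the $n$-step reachability matrix applied to the \emph{extra} $n$ block rows of the deeper input Hankel $\mathcal{H}_{n+1}(\Z_0)$ to express every row combination of $[\X_0;\Z_0]$ as a bounded row combination of $\mathcal{H}_{n+1}(\Z_0)$, with the bound (which yields $\rho$ and the $\sqrt{n+1}$ factor) controlled by $(\tilde{\A},\tilde{\B})$ alone. That elimination-of-the-state step is the entire content of the cited result; invoking it by name inside Step~1 while attempting to prove the lemma leaves the proof without its core.
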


We refer the reader to \cite{coulson2022quantitative} for computation of the constant $\rho$ as a function of $(\tilde{\A}, \tilde{\B})$. In the stochastic setting, we can further bound the minimum singular value of the Hankel matrix in \eqref{eq:quant_FL} with high probability as follows.

\begin{lemma}[Lemma C.2 in \cite{oymak2019non}] 
\label{LowerBoundOfGaussianInput} Consider the input matrix $\Z_{0}$ of \eqref{NoiselessLTI_isometric_input}, with $\bz_t \overset{i.i.d.}{\sim} \mathcal{N}\left(0, \sigma^2_z \I_{m+n}\right)$ and  {$\sigma_z>0$}. 
If the trajectory length satisfies {$T \geq c \Lambda(m,n,T)$,  for $\Lambda(m,n,T):=(n+1) (m+n) \log^2(2 (n+1) (m+n)) \log^2(2 T (m+n))$} and a sufficiently large constant $c>0$, then
we have 
\begin{equation}
\label{eq_least_singular_Hankel}
\Pb_T \left(  \underline{\sigma}\left(  \mathcal{H}_{n+1}\left(\Z_0(T)\right)\right) \geq \frac{\sqrt{T-n} \sigma_z}{\sqrt{2}}  \right) \geq 1-\epsilon_T,
\end{equation}
where $\epsilon_T := (2T(m+n))^{-\log^2(2(n+1) (m+n) ) \log(2 T(m+n) )}$ and $\Pb_T$ is with respect to the the randomness of $\Z_{0}$.
\end{lemma}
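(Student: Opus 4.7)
The plan is to lower bound the smallest singular value squared $\lambda_{\min}(\mathcal{H}\mathcal{H}^\top)$, where $\mathcal{H} := \mathcal{H}_{n+1}(\Z_0(T)) \in \Rb^{p \times q}$, $p := (n+1)(m+n)$, $q := T-n$. The $j$-th column of $\mathcal{H}$ is the shifted window $\mathbf{h}_j := [\z_j^\top, \z_{j+1}^\top, \ldots, \z_{j+n}^\top]^\top$, which is marginally $\mathcal{N}(0, \sigma_z^2 \I_p)$, and $\mathcal{H}\mathcal{H}^\top = \sum_{j=0}^{q-1} \mathbf{h}_j \mathbf{h}_j^\top$. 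The central difficulty is that neighboring columns share $n$ of the $n+1$ $\z$-blocks, so these rank-one summands are not independent and a direct Wishart-type concentration does not apply.

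To recover independence, I would partition the column indices $\{0, 1, \ldots, q-1\}$ into $n+1$ residue classes modulo $n+1$ and define $\M_\ell := \sum_{j \equiv \ell \pmod{n+1}} \mathbf{h}_j \mathbf{h}_j^\top$ for $\ell = 0, \ldots, n$. Within each class the summands are i.i.d.\ outer products of $\mathcal{N}(0, \sigma_z^2 \I_p)$ vectors, there are $N_\ell \approx q/(n+1)$ of them, and $\sum_\ell N_\ell = q$. I would then lower bound each $\lambda_{\min}(\M_\ell) \geq N_\ell \sigma_z^2 / 2$ via an $\varepsilon$-net argument on the unit sphere $\mathcal{S}^{p-1}$: for fixed unit $\mathbf{u}$, $\mathbf{u}^\top \M_\ell \mathbf{u}/\sigma_z^2$ is a $\chi^2_{N_\ell}$ random variable and the Laurent--Massart bound controls its lower tail exponentially in $N_\ell$; covering $\mathcal{S}^{p-1}$ at scale $\varepsilon \asymp (T(m+n))^{-1}$ gives a net of log-cardinality $O(p \log(T(m+n)))$, and balancing this against the single-point exponential tail, together with a union bound over the $n+1$ classes, is what produces the iterated-logarithmic form of $\Lambda(m,n,T)$ and $\epsilon_T$.

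Finally, because each $\M_\ell \succeq 0$, Weyl's inequality for symmetric matrices yields $\lambda_{\min}(\mathcal{H}\mathcal{H}^\top) \geq \sum_{\ell=0}^{n} \lambda_{\min}(\M_\ell) \geq \tfrac{\sigma_z^2}{2} \sum_\ell N_\ell = \tfrac{(T-n)\sigma_z^2}{2}$ on the intersection of the good events, and taking square roots completes the bound. The main obstacle is matching the stated dimension scaling exactly: the disjoint-partition step above requires $N_\ell \gtrsim p \cdot \mathrm{polylog}$, i.e.\ $T \gtrsim (n+1)^2(m+n) \cdot \mathrm{polylog}$, which is a factor of $n+1$ looser than the stated $(n+1)(m+n) \cdot \mathrm{polylog}$. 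Closing this gap likely demands a finer analysis that treats $\mathcal{H}\mathcal{H}^\top$ as a single quadratic form in the full Gaussian vector $\mathrm{vec}(\Z_0)$ and applies a Hanson--Wright-type inequality exploiting the shift-invariant Toeplitz covariance, rather than a black-box union bound over disjoint sub-sums; this refinement is where the cited proof of Oymak and Ozay presumably pays the careful technical price to obtain the sharper dimension dependence and the precise tail $\epsilon_T$.
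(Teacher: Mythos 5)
First, a framing point: the paper does not prove this lemma at all --- it is imported verbatim as Lemma C.2 of Oymak and Ozay \cite{oymak2019non}, so there is no in-paper proof to compare against and the comparison must be with the cited source. Your construction itself is sound as far as it goes: splitting the $T-n$ columns of $\mathcal{H}_{n+1}(\Z_0)$ into $n+1$ residue classes modulo $n+1$ does make the columns within each class depend on disjoint blocks $\z_j$, hence i.i.d.\ $\mathcal{N}(0,\sigma_z^2\I_p)$ with $p=(n+1)(m+n)$; the lower bound $\lambda_{\min}(\M_\ell)\geq N_\ell\sigma_z^2/2$ for each sub-Wishart matrix is standard; and superadditivity of $\lambda_{\min}$ over positive semidefinite summands then gives exactly the claimed numerical bound $\underline{\sigma}(\mathcal{H}_{n+1}(\Z_0))\geq \sigma_z\sqrt{(T-n)/2}$.

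The genuine gap is the one you flag yourself, and it is not cosmetic: the sub-Wishart step requires each $N_\ell\approx (T-n)/(n+1)$ to exceed the ambient dimension $p$ up to logarithms, i.e.\ $T\gtrsim (n+1)^2(m+n)\,\mathrm{polylog}$, whereas the lemma asserts the bound already for $T\geq c\,\Lambda(m,n,T)$ with $\Lambda$ linear in $(n+1)(m+n)$. So what you have is a proof of a strictly weaker lemma with an extra factor of $n+1$ in the burn-in, not of the statement as written, and your union-bound route also cannot reproduce the specific tail $\epsilon_T=(2T(m+n))^{-\log^2(2(n+1)(m+n))\log(2T(m+n))}$. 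The cited proof avoids the loss by not decoupling the columns: it controls the Hankel quadratic form as a whole through bounds for random circulant/Toeplitz matrices built on suprema of chaos processes (Krahmer, Mendelson, and Rauhut), which is precisely where the $\log^2(2(n+1)(m+n))\log^2(2T(m+n))$ factors in $\Lambda$ and the super-polynomial tail come from --- your closing remark correctly identifies that some such global argument is needed. One mitigating observation: for the way Lemma \ref{LowerBoundOfGaussianInput} is actually used in this paper, namely to conclude in Lemma \ref{LowerBoundOfGaussianData} and Theorem \ref{th:RP_inconsistency} that $\underline{\sigma}(\D_T\D_T^\top)$ grows linearly in $T$ so that $\K_{rp}(T)\overset{p}{\rightarrow}\mathbf{0}_{m\times n}$, your weaker burn-in condition would be entirely sufficient, since the downstream claims are asymptotic in $T$; it just does not establish the lemma with the stated constants.
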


 
Hence, since $\epsilon_T$ is a decreasing function of $T$, the quantitative measure of persistency of excitation in Lemma~\ref{LowerLeastSingualeDataMatrices} grows with $\sqrt{T}$ when the system is driven by isometric Gaussian input. Therefore, based on Lemma \ref{LowerLeastSingualeDataMatrices} and Lemma \ref{LowerBoundOfGaussianInput}, we can show that the least singular value of the combined data matrix $\D_T$ grows with $\sqrt{T}$.

\begin{lemma}
\label{LowerBoundOfGaussianData}
Consider the LTI system in \eqref{LTI} and its combined data matrix $\D_T$ in \eqref{combined_data_matrix} with  {$\sigma_w>0$}. When the length of trajectory obeys $T \geq c\Lambda(m,n,T)$ for a sufficiently large constant $c>0$, there exists a positive constant $\rho$, which is determined only by $(\tilde{\A}, \tilde{\B})$, such that
\begin{equation}
\Pb_T \left(  \underline{\sigma}\left( \D_T\right) \geq  \underline{\sigma}\left(\bP_2 \right)   \frac{\sqrt{T-n}\rho \sigma_u }{\sqrt{2(n+1)} } \right) \geq 1-\epsilon_T,
\end{equation}
 where the probability $\Pb_T$ is with respect to the randomness of $x_0$, $\U_0(T)$, and $\W_0(T)$.
\end{lemma}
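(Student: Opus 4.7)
The plan is to mirror the structure used in the proof of Lemma \ref{FirstFullRowRank}, but now tracking quantitative bounds on singular values rather than only the full row rank property. First, I will identify a square nonsingular matrix $\bP_2$ such that $\D_T = \bP_2 \tilde{\D}_T$, where $\tilde{\D}_T := [\X_0^\top \;\; \Z_0^\top]^\top$ is the input-state data matrix of the reformulated noise-free system \eqref{NoiselessLTI_isometric_input}. Using relation \eqref{eq:relationDataMatrices_isotropic}, a natural candidate is
$$\bP_2 = \left[\begin{array}{ccc} \I_n & \mathbf{0}_{n\times m} & \mathbf{0}_{n \times n} \\ \mathbf{0}_{m \times n} & \I_m & \mathbf{0}_{m \times n} \\ \tilde{\A} & \B & \frac{\sigma_w}{\sigma_u}\I_n \end{array}\right],$$
which is square (of size $2n+m$) and nonsingular, since its lower block-triangular structure has invertible diagonal blocks, so in particular $\underline{\sigma}(\bP_2) > 0$.

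Next, because $\bP_2$ is square and nonsingular, the submultiplicativity bound $\underline{\sigma}(\D_T) = \underline{\sigma}(\bP_2 \tilde{\D}_T) \geq \underline{\sigma}(\bP_2)\,\underline{\sigma}(\tilde{\D}_T)$ applies. I would then invoke Lemma \ref{LowerLeastSingualeDataMatrices} on the reformulated system, which provides a constant $\rho > 0$ depending only on $(\tilde{\A}, \tilde{\B})$ with $\underline{\sigma}(\tilde{\D}_T) \geq \underline{\sigma}(\mathcal{H}_{n+1}(\Z_0))\,\rho/\sqrt{n+1}$. Because the columns of $\Z_0$ are $\z_t \overset{i.i.d.}{\sim} \mathcal{N}(0, \sigma_u^2 \I_{m+n})$ by construction, Lemma \ref{LowerBoundOfGaussianInput} applied with $\sigma_z := \sigma_u$ yields $\underline{\sigma}(\mathcal{H}_{n+1}(\Z_0(T))) \geq \sqrt{(T-n)/2}\,\sigma_u$ with probability at least $1 - \epsilon_T$, whenever $T \geq c\Lambda(m,n,T)$ for the constant $c$ appearing in that lemma.

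Chaining these three estimates on the common high-probability event gives
$$\underline{\sigma}(\D_T) \geq \underline{\sigma}(\bP_2)\,\frac{\sqrt{T-n}\,\rho\,\sigma_u}{\sqrt{2(n+1)}},$$
which is exactly the claimed bound. The main conceptual point to justify carefully is the change of variables: the rescaling $\frac{\sigma_u}{\sigma_w}\bw_t$ is engineered precisely so that $\z_t$ is isotropic Gaussian with covariance $\sigma_u^2 \I_{m+n}$, which is needed to apply Lemma \ref{LowerBoundOfGaussianInput} in its stated form. Beyond this, the principal obstacle is really bookkeeping, namely verifying that $\rho$ from Lemma \ref{LowerLeastSingualeDataMatrices} depends only on $(\tilde{\A}, \tilde{\B})$ and not on $T$, so that the lower bound genuinely scales like $\sqrt{T-n}$; the rest is a routine composition of the three estimates.
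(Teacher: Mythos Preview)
Your proposal is correct and follows essentially the same route as the paper's proof: factor $\D_T = \bP_2 \tilde{\D}_T$ with the same block lower-triangular $\bP_2$, apply Lemma~\ref{LowerLeastSingualeDataMatrices} and Lemma~\ref{LowerBoundOfGaussianInput} to bound $\underline{\sigma}(\tilde{\D}_T)$, and then use $\underline{\sigma}(\bP_2\tilde{\D}_T)\geq\underline{\sigma}(\bP_2)\,\underline{\sigma}(\tilde{\D}_T)$. The only minor point the paper makes explicit that you leave implicit is that $(\tilde{\A},\tilde{\B})$ is controllable for any $\A,\B$, which is what guarantees $\rho>0$ in Lemma~\ref{LowerLeastSingualeDataMatrices}.
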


\begin{proof}
By \eqref{eq:relationDataMatrices_isotropic}, the combined data matrix $\D_T$ in \eqref{combined_data_matrix} can be represented  as
    \begin{equation}
      \D_T =  \bP_2  \tilde{\D}_T,
\label{TwoDataMatrices_isometric}
      \end{equation}
   where $\bP_2 := \left[\begin{array}{ccc}
\I_n & \mathbf{0}_{n\times m} & \mathbf{0}_{n\times n} \\
\mathbf{0}_{m\times n} & \I_m & \mathbf{0}_{m\times n} \\
\A &   \B & \frac{\sigma_w }{\sigma_u} \I_n  
\end{array}\right]$ and  $\tilde{\D}_T := 
\left[\begin{array}{ccc}
\X_0^\top &
 \U_0^\top &
 \frac{\sigma_u}{\sigma_w}\W_0^\top
\end{array}\right]^\top.$ $\tilde{\D}_T$ can seen as the input-state data matrix of the noise-free system \eqref{NoiselessLTI_isometric_input}. We also note that, by definition, $\bz_t$ has covariance $\sigma_u^2 \I_{m+n}$; 
and $(\tilde{\A},\tilde{ \B })$ given in \eqref{NoiselessLTI_isometric_input} is controllable for any $\A$ and $\B$. Therefore, combining Lemma \ref{LowerLeastSingualeDataMatrices} and Lemma \ref{LowerBoundOfGaussianInput},  when the length of trajectory obeys $T \geq c\Lambda(m,n,T)$ for sufficiently large constant $c>0$,  we have  
\begin{equation}
\begin{aligned}
&\Pb_T \left(   \underline{\sigma}\left( \tilde{\D}_T\right) \geq \frac{\sqrt{T-n}\rho \sigma_u }{\sqrt{2(n+1)}}\right)  \geq 1-\epsilon_T.
\label{SecondLeastSingularValue}
\end{aligned}
\end{equation}

Combining \eqref{TwoDataMatrices_isometric}, \eqref{SecondLeastSingularValue}, and the inequality $\underline{\sigma}\left(\bP_2  \tilde{\D}_T \right) \geq\underline{\sigma}\left(\bP_2    \right) \underline{\sigma}\left( \tilde{\D}_T \right) $,  we complete the proof.
\end{proof}

Next, we give the proof of Theorem \ref{th:RP_inconsistency}.
\begin{proof} (of Theorem \ref{th:RP_inconsistency})
The proof of this theorem is structured as follows: \textbf{Step 1:} A lower bound for the  objective value of any feasible solution of RP DDD LQR problem \eqref{RPDDDLQRSDP} is established. \textbf{Step 2:} An upper bound for the optimal objective value of \eqref{RPDDDLQRSDP} is derived.
\textbf{Step 3:} An upper bound of $\overline{\sigma}\left(\U_0  \Y_{rp}^*(T)\right)$ is determined.
\textbf{Step 4:} It is further shown that, as the trajectory length $T$ approaches infinity, $\K_{rp}(T)$ converges to the zero matrix  in probability. 

\textbf{Step 1:} Following a similar approach used to obtain \eqref{CEDDDLQRNonconvex}, and utilizing the Schur complement and removing $\X$ and $\Sb$, Problem \eqref{RPDDDLQRSDP} can be reformulated as the following equivalent problem:
 \begin{equation}
\begin{array}{cl}
\underset{  \Y }{\operatorname{minimize}} & \operatorname{trace}\left(\Q \X_0 \Y  \right)+\\
&\operatorname{trace}\left(  \sqrt{\R} \U_0 \Y \left( \X_0 \Y\right)^{-1}  \left(\sqrt{\R} \U_0 \Y \right)^{\top} \right)\\
&+ \eta \operatorname{trace}\left(   \Y \left( \X_0 \Y\right)^{-1}   \Y^{\top} \right) \\
\text { subject to } 
& \X_{0} \Y - \I_n \succeq 0\\
&\X_{0} \Y - \I_n - \X_{1}  \Y \left( \X_{0} \Y\right)^{-1}  \Y^{\top} \X_{1}^{\top}\succeq 0 ,
\end{array}
\label{RPDDDLQRNonconvex}
\end{equation} 
where $\Y\in \Rb^{T \times n}$. We use $O_{rp}\left( \Y\right)$ to denote the objective function and $O_{rp}^*(T)$ for the optimal objective value of the above optimization problem. 
 For any feasible solution $\Y_f$ of \eqref{RPDDDLQRNonconvex}, we have 
 $\X_{0} \Y_f \succeq \I_n $. Therefore,  
 $$ \sqrt{\R} \U_0 \Y_f \left( \X_0 \Y_f \right)^{-1}  \left(\sqrt{\R} \U_0 \Y_f  \right)^{\top} \succeq 0,$$ and $   \Y_f \left( \X_0 \Y_f  \right)^{-1}   \Y_f^{\top}  \succeq 0$. Then for any feasible solution $\Y_f$,
\begin{equation}
\label{upper_bound_trace_QX0Y}
    \operatorname{trace}\left(\Q \X_0 \Y_f  \right) \leq     O_{rp}\left( \Y_f\right).
\end{equation}

\textbf{Step 2:} Let $\Y_{n}$ denote the minimum {(Frobenius)} norm solution of the  underdetermined equation \eqref{OptimalSolutionEquation}. By Lemma~\ref{FirstFullRowRank}, when $T>(m+n)(n+1)+n$, $\D_T$ is full row rank with probability one, hence the minimum norm solution exists and is equal to:
\begin{equation}
\begin{aligned}
\Y_n &= \D_T^\top \left( \D_T \D_T^\top\right)^{-1} \E,
\end{aligned}
\label{minimumNormSolution}
\end{equation}
where $\E$ denotes the right-hand side of \eqref{OptimalSolutionEquation}.

It can be seen that $\Y_{n}$ satisfies the constraints in \eqref{RPDDDLQRNonconvex}, hence it is a feasible solution. 
Therefore, we have $
    O_{rp}^*(T) \leq O_{rp}\left(\Y_{n} \right).$ Furthermore, we have
\begin{equation}
\begin{aligned}
    &O_{rp}^*(T) \leq O_{rp}\left(\Y_{n} \right)\\
    & = \operatorname{trace}\left(  \sqrt{\R} \U_0 \Y_n \left( \X_0 \Y_{n}\right)^{-1}  \left(\sqrt{\R} \U_0 \Y_{n}  \right)^{\top} \right)\\
    &\quad + \operatorname{trace}\left(\Q \X_0 \Y_{n}  \right) + \eta\operatorname{trace}\left(   \Y_{n} \left( \X_0 \Y_{n}\right)^{-1}   \Y_{n}^{\top} \right) \\
    &\overset{(a)}{=}\operatorname{trace}\left( \Q\right) + \eta \operatorname{trace} \left(\Y_n^\top \Y_n\right)\\
    &\overset{(b)}{=} \operatorname{trace}\left( \Q\right) + \eta \operatorname{trace} \left(\E^\top  \left(  \D_T \D_T^\top \right)^{-1}\E\right)\\
    &\overset{(c)}{\leq}  \operatorname{trace}\left( \Q\right) + \eta (2n+m)  \left\| \E^\top  \left(  \D_T \D_T^\top \right)^{-1}\E\right\|_2\\
& \leq \operatorname{trace}\left( \Q\right) + \eta(2n+m) \left\|\E  \right\|_2 \left\| \left(  \D_T \D_T^\top \right)^{-1}\right\|_2 \left\|\E\right\|_2\\
&\overset{(d)}{\leq} \operatorname{trace}\left( \Q\right) + \frac{\eta(2n+m)}{ \underline{\sigma}\left( \D_T \D_T^\top  \right)},
\end{aligned}
\label{upperObjectiveValue}
\end{equation}
where the equality $(a)$ is because $\X_0\Y_n=\I_n$ and $\U_0\Y_n=\mathbf{0}_{m\times n}$ in \eqref{OptimalSolutionEquation}, the   equality $(b)$ is due to \eqref{minimumNormSolution}, the inequality $(c)$ is from the fact that $\operatorname{trace}(\M) \leq n \|\M\|_2$ for a square matrix $\M\in \mathbb{R}^{n \times n}$, and the inequality $(d)$ is based on the facts that $\|\E\|_2 = 1$ and $ \|  (  \D_T \D_T^\top  )^{-1} \|_2 \leq \frac{1}{ \underline{\sigma} ( \D_T \D_T^\top   )}$.

\textbf{Step 3:} Consider an arbitrary optimal solution $\Y^*_{rp}(T)$ of \eqref{RPDDDLQRSDP}. Combining \eqref{upper_bound_trace_QX0Y} and \eqref{upperObjectiveValue}  gives
\begin{equation}
\label{lower_bound_QX0Y}
\operatorname{trace}\left(\Q \X_0 \Y_{rp}^*(T)  \right) \leq \operatorname{trace}\left( \Q\right) + \frac{\eta(2n+m)}{ \underline{\sigma}\left( \D_T \D_T^\top  \right)},\end{equation}
from which we obtain 
\begin{equation}
\label{upper_sigma_x0Y}
  0 \leq  \overline{\sigma}\left(\X_0  \Y_{rp}^*(T) \right) \leq \frac{\operatorname{trace}(\Q)}{\underline{\sigma}(\Q)} + \frac{\eta(2n+m)}{\underline{\sigma}(\Q) \underline{\sigma}\left( \D_T \D_T^\top  \right)},
\end{equation}
where the second inequality is based on the fact $\operatorname{trace}(\M_1\M_2)\geq \operatorname{trace}(\M_1)\underline{\sigma}(\M_2)\geq \overline{\sigma}(\M_1)\underline{\sigma}(\M_2)$, for $\M_1,\M_2\succeq0$.

Thanks to $ \X_{0} \Y_{rp}^*(T) - \I_n \succeq 0$, we have
\begin{equation}
\label{upper_U0Y1}
\begin{aligned}
&\operatorname{trace}\left(\Q   \right) + \\
&\operatorname{trace}\left(  \sqrt{\R} \U_0 \Y \left( \X_0  \Y_{rp}^*(T) \right)^{-1}  \left(\sqrt{\R} \U_0  \Y_{rp}^*(T)  \right)^{\top} \right) \\
&\leq O^*_{rp}(T) \leq \operatorname{trace}\left(\Q   \right)+\frac{\eta(2n+m)}{ \underline{\sigma}\left( \D_T \D_T^\top \right)},
\end{aligned}
\end{equation}
where the last inequality follows from \eqref{upperObjectiveValue}.
Hence we have  
\begin{equation}
\label{upper_trace_U0Y_1}
\begin{aligned}
&0 \leq \operatorname{trace}\left(   \left(\sqrt{\R} \U_0  \Y_{rp}^*(T)  \right)^{\top} \sqrt{\R} \U_0  \Y_{rp}^*(T)\right) \\
& \leq \frac{\eta(2n+m)}{ \underline{\sigma}\left( \D_T \D_T^\top \right)} \overline{\sigma}\left(\X_0  \Y_{rp}^*(T) \right).
\end{aligned}
\end{equation}

Finally, combining \eqref{upper_sigma_x0Y} and \eqref{upper_trace_U0Y_1}    yields
\begin{equation}
\label{upper_sigma_UY}
\begin{aligned} &\left(\overline{\sigma}\left(\U_0  \Y_{rp}^*(T)\right)\right)^2 \leq \\
 &\frac{\eta(2n+m)}{\underline{\sigma}(\R) \underline{\sigma}\left( \D_T \D_T^\top \right)}  \left[\frac{\operatorname{trace}(\Q)}{\underline{\sigma}(\Q)} + \frac{\eta(2n+m)}{\underline{\sigma}(\Q) \underline{\sigma}\left( \D_T \D_T^\top  \right)}\right].
 \end{aligned}
\end{equation}

\textbf{Step 4:} As $\X_0 \Y^*_{rp}(T) \succeq \I_n $, we have
\begin{equation}
\label{lower_sigma_XY}
    \underline{\sigma}(\X_0 \Y^*_{rp}(T)) \geq 1 \geq \overline{\sigma}\left((\X_0 \Y^*_{rp}(T)\right)^{-1}).
\end{equation}
 
  Because  $\K_{rp}(T) = -\U_0 \Y^*_{rp}(T) \left(\X_0 \Y^*_{rp}(T) \right)^{-1},$  
  then \eqref{upper_sigma_UY} and \eqref{lower_sigma_XY} imply
  \begin{equation}
  \label{upper_K_rp}
  \begin{aligned}
    & \|\K_{rp}(T)\|^2_2  \\
    &\leq  \left( \overline{\sigma}\left(\U_0  \Y_{rp}^*(T)\right) \right)^2 
    \\
    &\leq  \frac{\eta(2n+m)}{\underline{\sigma}(\R) \underline{\sigma}\left( \D_T \D_T^\top \right)}  \left[\frac{\operatorname{trace}(\Q)}{\underline{\sigma}(\Q)} + \frac{\eta(2n+m)}{\underline{\sigma}(\Q) \underline{\sigma}\left( \D_T \D_T^\top  \right)}\right]  .
    \end{aligned}
\end{equation}

 Combining \eqref{upper_K_rp} and Lemma \ref{LowerBoundOfGaussianData}, when the length of the trajectory obeys $T \geq c\Lambda(m,n,T)$ for a sufficiently large constant $c>0$, we have that 

\begin{equation}
\label{upper_K_rp_prob}
\begin{aligned}
& \Pb_T \left( \|\K_{rp}(T)\|^2_2 \leq \frac{ C }{     T-n  }\left[\frac{\operatorname{trace}(\Q)}{\underline{\sigma}(\Q)}+
  \frac{ C }{    T-n } \right]\right)\\
  &\geq 1-\epsilon_T,
  \end{aligned}
\end{equation}
\noindent{where 
$C := \frac{2(n+1)   (2n+m) \eta }{\min\{\underline{\sigma}(\R) ,\underline{\sigma}(\Q)  \} \left(\underline{\sigma}\left(\bP_2 \right)\right)^2   \rho^2 \sigma_u^2 }$ 
contains the terms that are independent of the horizon $T$. }

Fix any $\delta >0$. Let $T^*  $ be the minimum $T$ such that  $\frac{ C }{    T-n  }\left[\frac{\operatorname{trace}(\Q)}{\underline{\sigma}(\Q)}+
  \frac{ C }{     T-n   } \right] \leq \delta$ and $\bar{T}$ be the minimum $T$ such that  $T \geq c\Lambda(m,n,T)$. Then, for all $T\geq \max(T^*, \bar{T})$, we have
\begin{equation}
\label{bound_w}
\Pb_T \left( \|\K_{rp}(T)\|^2_2 > \delta
  \right) < \epsilon_T.
\end{equation}
This implies, for all $\delta>0$, 
\begin{equation}
  \lim_{T\rightarrow \infty} \Pb_T  \left( \|\K_{rp}(T)\|^2_2> \delta \right)=0,\end{equation}
that is, 
$\|\K_{rp}(T)\|^2_2  \overset{p}{\rightarrow} 0.$ Hence,
$
    \K_{rp}(T)\overset{p}{\rightarrow}\mathbf{0}_{m\times n} .$
 \end{proof}

We note that the analysis in this section assumes a fixed regularization constant $\eta$ for all horizons $T$. However, the bound in~\eqref{upperObjectiveValue} suggests that scaling $\eta$ with $T$ can prevent the gain from collapsing to zero. This is investigated in the experiments in Section \ref{rp_ddd_lqr_in_eta}.
  
 \section{Numerical Experiments}
 \label{sec:num_ex}
In this section, we present three numerical experiments to validate our main theoretical results. The experiments include CE, RP with a fixed regularization parameter, and RP with an increasing regularization parameter. These experiments are conducted on a second-order single-input system, described by the dynamics in \eqref{LTI}, with the following system matrices:
 \begin{equation}
 \A = \left[\begin{array}{cc}
 0.8878 & 0.2232  \\
0.3491 & 0.3726  
\end{array}\right]\;\text{and} \;\B = \left[\begin{array}{c}
-0.6808  \\
0.3726   
\end{array}\right],
\label{sys_exp}
\end{equation}
where the spectral radius of $\A$ is $1.01$, indicating that this LTI system is open-loop unstable. The initial state $ \x_0  = \mathbf{0}_{2\times1}$. For the parameter matrices $(\Q,\R)$ in LQR problem, we set $\Q=\I_2$ and $\R=1$, yielding a true LQR gain:
\begin{equation}
 \K_{\lqr}=\left[\begin{array}{cc}
-0.7112 & -0.2046
\end{array}\right].
\label{K_lqr_exp}
\end{equation}

\subsection{CE DDD LQR} 
In this part, we validate the theoretical results of CE DDD LQR in Theorem \ref{th:CE_inconsistency} and compare it with the noiseless case in Theorem \ref{thm:NoiselessIsGood}. We consider the specific system in \eqref{sys_exp}. The data matrices are collected offline by exciting the system with random inputs,  sampled as $\bu_t \overset{i.i.d.}{\sim} \mathcal{N}(0, \sigma_{u}^2)$, with $\sigma_u^2=1$. The trajectory length is set to $T=50$. The implementation of CE DDD LQR in \eqref{CEDDDLQRSDP} is performed using YALMIP \cite{Lofberg2004} with MOSEK in MATLAB. As expected, for the noiseless case ($ \sigma_{w}^2=0$), the LQR gain estimate computed by CE DDD LQR is 
$\K_{ce}= \left[\begin{array}{cc}
-0.7112 & -0.2046
\end{array}\right],$
which matches the true LQR gain in \eqref{K_lqr_exp}. In contrast, for a small noise level ($ \sigma_{w}^2=0.00001$), the LQR gain estimate obtained from DDD LQR  is
$\K_{ce}= \left[\begin{array}{cc}
0 & 0
\end{array}\right].$
This estimate results in an unstable closed-loop system for the system in \eqref{sys_exp}, highlighting that CE DDD LQR is highly sensitive to noise, even at very low levels. This observation corroborates the validity of Theorem \ref{th:CE_inconsistency}.

\subsection{RP DDD LQR with a Fixed Regularization Parameter}
\label{exp_rp_ddd_lqr_fixed_eta}
In this section, we verify Theorem \ref{th:RP_inconsistency} for RP DDD LQR in \eqref{RPDDDLQRSDP} with a fixed regularization parameter $\eta=1$. Similar to the previous section for CE DDD LQR, we consider the same specific system dynamics as described in \eqref{sys_exp}, with the inputs in the data matrices unchanged. The spectral norm of the noise covariance matrix is set to $ \sigma_{w}^2=1$. The results are illustrated in Fig. \ref{fig_gain_change_fixed_eta} and 
Fig. \ref{fig_optimization_variable_change}. For each horizon $T$, we run $10$ independent experiments to compute the mean and the variance of each respective data point.  
From Fig. \ref{fig_gain_change_fixed_eta}, we have that when horizon $T$ increases, $\|\K_{rp}\|$ with $\sigma_w >0$ approaches zero, by which Theorem \ref{th:RP_inconsistency} is verified. 
Interestingly, when the data is generated from a system not subject to noise, 
with a fixed regularization parameter ($\eta=1$), $\|\K_{rp}\|$ converges to $\|\K_{\lqr}\|$ as the trajectory length $T$ increases
as shown in Fig. \ref{fig_gain_change_fixed_eta} (indeed, $\K_{rp}$ converges to $\K_{\lqr}$). Additionally, based on Fig. \ref{fig_optimization_variable_change}, we see that as $T$ increases, $\|\X_0 \Y_{rp}^*\|$ approaches $1$,  $\|\U_0  \Y_{rp}^*\|$ approaches $0$, and $\|\K_{rp} \|$ with $\sigma_w =0$ approaches $0$. In fact, it is possible to show that as $T$ approaches infinity, any optimal solution of RP DDD LQR converges in probability to a solution of equation \eqref{OptimalSolutionEquation}. 

\subsection{RP DDD LQR with an Increasing Regularization Parameter}
\label{rp_ddd_lqr_in_eta}

This section investigates what happens if the regularization parameter $\eta$ is increased with increasing horizon length $T$. The bound in~\eqref{upperObjectiveValue} suggests that if $\eta$ increases with the data trajectory length $T$, the optimal objective value of \eqref{RPDDDLQRSDP} may not converge to $\operatorname{trace}(\Q)$, thereby resulting in a nontrivial feedback gain. Motivated by this intuition,  we set $\eta=10T$. As shown in Fig. \ref{fig_gain_change_in_eta}, as the data trajectory length $T$ increases, $\|\K_{rp}\|$ with $\sigma_w >0$ does not approach zero. This observation highlights the potential of increasing $\eta$ systematically to obtain more reliable solutions, a direction that might be worthwhile to theoretically investigate.

\begin{figure}
  \centering
  \includegraphics[scale=0.38]{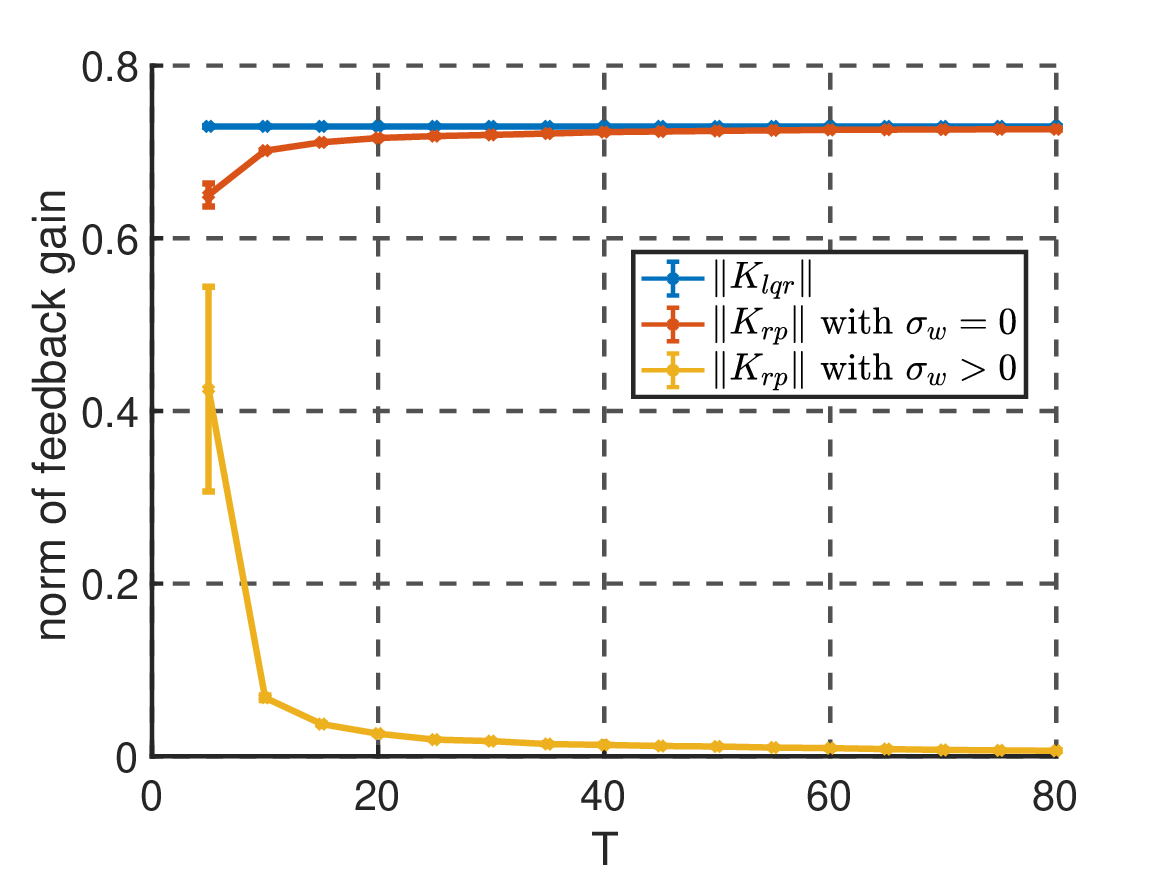}
  \caption{The $x$-axis is the length of the trajectory and the $y$-axis is the spectral norms of the feedback gains. The blue one is the spectral norm of the true LQR gain in \eqref{K_lqr_exp}, the orange one is the spectral norm of the feedback gain $\K_{rp}$ by RP DDD LQR without noise, and the yellow one is the spectral norm of the feedback gain estimate of  $\K_{rp}$ by RP DDD LQR with noise with a fixed $T$. The regularization parameter $\eta$ in RP DDD LQR is fixed for all $T$.}
  \label{fig_gain_change_fixed_eta}
\vspace{-0.3cm}
\end{figure}


\begin{figure}
  \centering
  \includegraphics[scale=0.38]{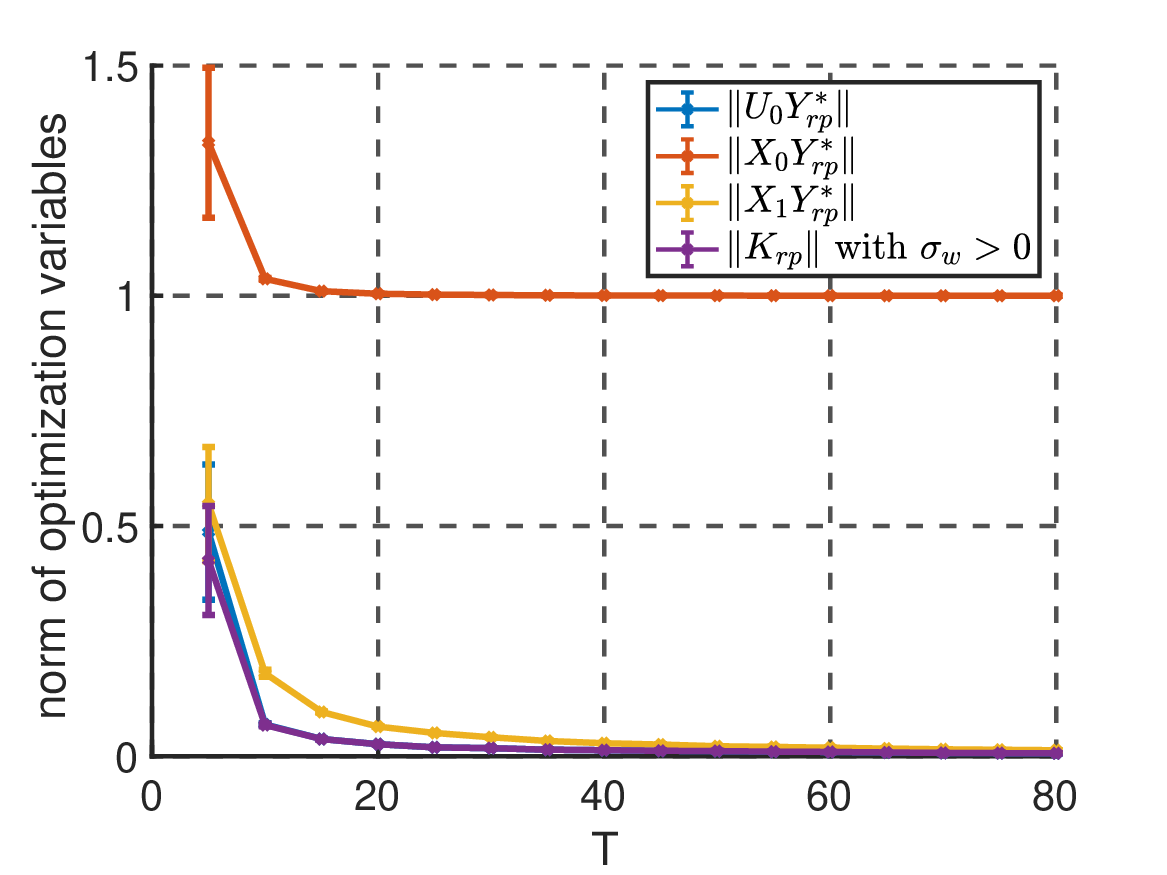}
  \caption{The $y$-axis is the norm of some optimal optimization variables. The blue one is the spectral norm of $\U_0  \Y_{rp}^*$, the orange one is the spectral norm of $\X_0 \Y_{rp}^*$, the yellow one is the spectral norm of $\X_1 \Y_{rp}^*$,  where $\Y_{rp}^*$ is an optimal solution of RP DDD LQR in \eqref{RPDDDLQRSDP}, and the purple one is the spectral norm of the feedback gain based on  \eqref{RPDDDLQRSDP} with a fixed $T$.}
  \label{fig_optimization_variable_change}
\vspace{-0.3cm}
\end{figure}

\begin{figure}
  \centering
  \includegraphics[scale=0.38]{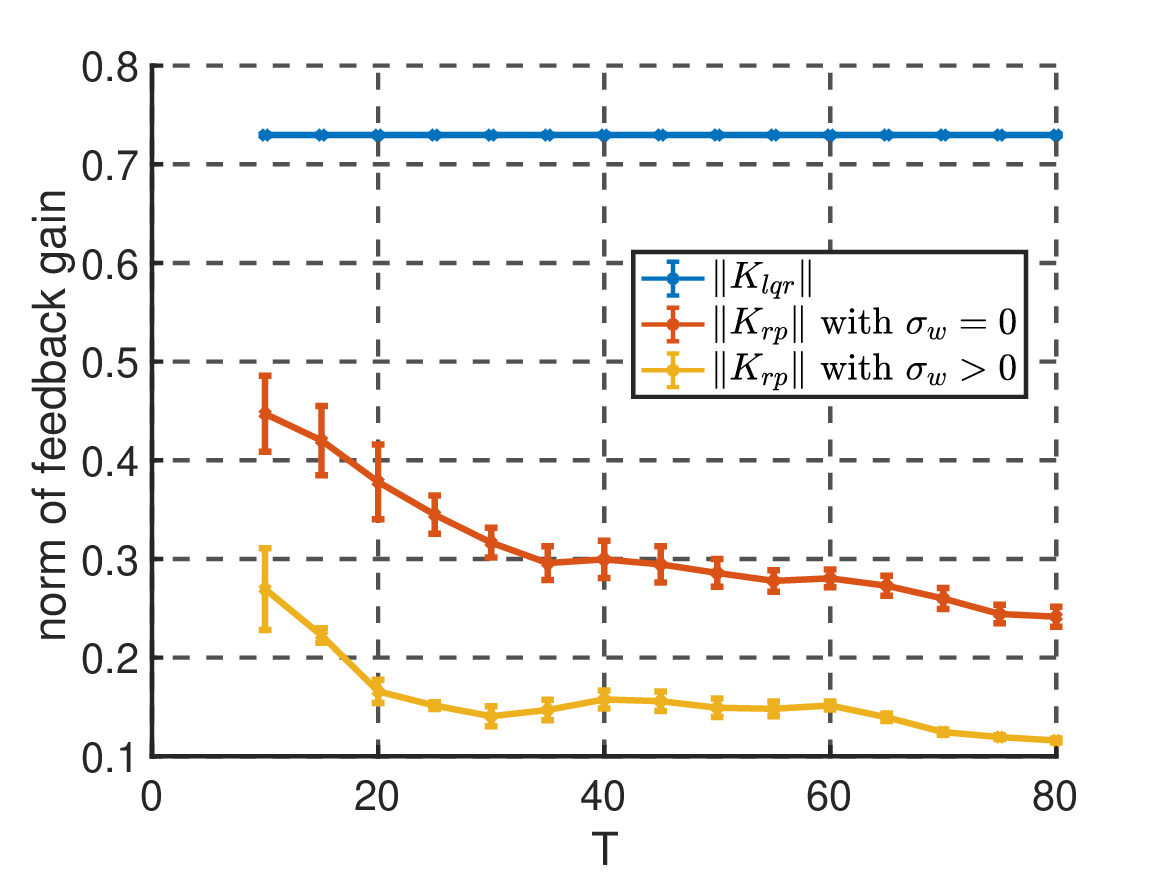}
  \caption{The only different setup of this figure with Fig. \ref{fig_gain_change_fixed_eta} is that we increase the regularization parameter $\eta$ in RP DDD LQR with the trajectory length $T$.}
  \label{fig_gain_change_in_eta}
\vspace{-0.3cm}
\end{figure}

\section{Conclusion and Future Work}
\label{sec_conclusion_future_work}
In this paper, we provide statistical analysis for two direct data-driven LQR methods in the presence of noise. Our results indicate that these methods are not statistically consistent. Therefore a ``certainty equivalence" approach that uses the original SDPs with noisy data is not appropriate. This is in contrast to model-based techniques, where certainty equivalence is known to be statistically consistent and sample-efficient. The identified limitations of the ``certainty equivalence" approach in direct data-driven control underscores the necessity of robust direct data-driven control methods \cite{van2020noisy}. Our future work will focus on understanding the statistical properties of such methods.

\section*{Acknowledgments}  
We thank the authors of \cite{de2021low} for their comments clarifying the relation of their results with ours. We also thank Prof. Vijay Subramanian of University of Michigan for helpful discussions on probabilistic convergence.


\bibliographystyle{IEEEtran}

 \appendix

In this lemma, we establish that continuous random variables, in particular Gaussian random variables, almost surely lead to persistently exciting inputs.
 \begin{lemma}
 \label{persistentinputnoise} 
Consider the input trajectory matrix $\V_0(T)$ defined in \eqref{eq:relationDataMatrices} with $\sigma_w>0$.  Let $T\geq (m+n)(n+1)+n$ and define $\Hb_T := \Hc_{n+1} \left(\V_0(T) \right)$.  Then, we have 
$$\mathbb{P}_T\left(\operatorname{rank}\left(\Hb_T\right)= (m+n)(n+1)\right)=1.$$
That is, $\V_0(T)$ is persistently exciting of order $n+1$ with probability one.
 \end{lemma}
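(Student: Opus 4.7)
My plan is to prove this by an algebraic genericity argument. Since $\bu_t \overset{i.i.d.}{\sim} \mathcal{N}(0,\sigma_u^2 \I_m)$ and $\bw_t \overset{i.i.d.}{\sim} \mathcal{N}(0,\sigma_w^2 \I_n)$ are independent with $\sigma_u,\sigma_w>0$, the $(m+n)T$ scalar entries of $\V_0(T)$ are jointly Gaussian with positive definite covariance, hence the law of $\V_0(T)$ is absolutely continuous with respect to Lebesgue measure on $\Rb^{(m+n)T}$. Every entry of $\Hb_T$ is one of these entries, so each $(m+n)(n+1)\times(m+n)(n+1)$ minor of $\Hb_T$ is a polynomial in the entries of $\V_0(T)$. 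The event $\{\operatorname{rank}(\Hb_T)<(m+n)(n+1)\}$ is the simultaneous vanishing of all such maximal minors, hence is contained in the zero set of any single one of them. Since the zero set of a nontrivial real polynomial has Lebesgue measure zero, it would suffice to exhibit a single deterministic realization $\V_0^\star$ of $\V_0(T)$ for which some maximal minor of $\Hb_T$ is nonzero, i.e., for which $\Hb_T$ has full row rank.

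To construct such a realization I would use a block-impulse pattern. Let $t_i := (i-1)(n+1) + n$ for $i=1,\ldots,m+n$, and set $\v^\star_{t_i} := \mathbf{e}_i \in \Rb^{m+n}$ (the $i$-th standard basis vector) and $\v^\star_t := 0$ for all other $t$. The column of $\Hb_T$ indexed by $j$ has its block at block-row $k$ equal to $\v^\star_{k+j}$, which is nonzero only when $k+j=t_i$ for some $i$; therefore each impulse at $t_i$ produces exactly $n+1$ nonzero columns, namely those with $j \in \{t_i - n, \ldots, t_i\}$, and in those columns the vector $\mathbf{e}_i$ appears at block-rows $n,n-1,\ldots,0$ respectively. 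The spacing $t_{i+1}-t_i = n+1$ guarantees that the column ranges from different impulses are disjoint, and the bound $t_{m+n} = (m+n)(n+1)-1 \le T-n-1$, which follows from the assumption $T\ge(m+n)(n+1)+n$, guarantees that all these columns lie within $\Hb_T$. These $(m+n)(n+1)$ columns then realize every pair (basis vector $\mathbf{e}_i$, block-row position $k$) exactly once, so they form a permutation of the standard basis of $\Rb^{(m+n)(n+1)}$; in particular $\Hb_T(\V_0^\star)$ has full row rank.

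Combining the two steps, the determinant of the submatrix formed by those specific $(m+n)(n+1)$ columns is a polynomial in the entries of $\V_0(T)$ that is nonzero at $\V_0^\star$, hence not identically zero, so its real zero set has Lebesgue measure zero; since the Gaussian law of $\V_0(T)$ is absolutely continuous, this zero set has probability zero, which yields $\Pb_T(\operatorname{rank}(\Hb_T)=(m+n)(n+1))=1$. The only nontrivial step is exhibiting the impulse realization above; once that construction is in hand, the measure-theoretic closure is entirely routine, and I do not anticipate any hidden obstruction beyond carefully indexing the impulses so that the resulting set of columns exhausts $\Rb^{(m+n)(n+1)}$.
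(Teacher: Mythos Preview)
Your argument is correct and follows the same genericity strategy as the paper: both reduce the claim to the fact that the zero set of a nonzero real polynomial in the entries of $\V_0(T)$ has Lebesgue measure zero, hence Gaussian probability zero. The paper restricts to the square case $T^*=(m+n)(n+1)+n$ and asserts that $\det(\Hb_{T^*})$ is a non-constant polynomial, then concludes $\Pb_{T^*}(\det(\Hb_{T^*})=0)=0$ via a (somewhat informal) continuity-of-density remark.

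Where your version differs is that you actually \emph{verify} the polynomial is not identically zero by exhibiting the block-impulse realization $\V_0^\star$, whereas the paper simply asserts ``non-constant'' without justification; because the entries of a Hankel matrix are not algebraically independent, this is not a vacuous step, and your construction cleanly resolves it. Your measure-theoretic closure (absolute continuity of the Gaussian law plus Lebesgue-nullity of a proper algebraic variety) is also more standard than the paper's density argument. So the route is the same, but your proof is strictly more complete on the one point the paper leaves implicit.
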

 \begin{proof}   Note that, by definition of the Hankel matrix, $\Hb_T$ has $(m+n)(n+1)$ rows and $T-n$ columns. 
 We will show the rank condition holds for $T^*=(m+n)(n+1)+n$ (i.e., $\Hb_{T^*}$ is a square matrix), which implies the same for any $T\geq T^*$ by $\Hb_{T^*}$ being the first $T^*$ columns of $\Hb_{T}$. Let $g  := \det \left(\Hb_{T^*} \right).$ Since determinant $g$ is a non-constant polynomial function of the elements $\bu_0,\bu_1,\dots,\bu_{T^*-1},\bw_0,\bw_1,\dots,\bw_{T^*-1}$ of $\V_0(T^*)$ and all these basic random variables are continuous Gaussian random variables, the probability density function of $g $ is also continuous. Then, we have that $\Pb_{T^*} (g =0) = \Pb_{T^*} (g \leq 0) - \Pb_{T^*} (g <0)=0.$ This means that $\Pb_{T^*} (g \neq 0)=1$. Therefore, $\Hb_{T^*}$ is full rank with probability one, that is, $\Pb_{T^*}\left( \text{rank} \left(\Hb_{T^*}   \right)=(m+n)(n+1)\right) =1.$ 
 \end{proof}

\end{document}